\newtheorem{thm}{Theorem}[section]
\newtheorem{lem}[thm]{Lemma}
\newtheorem{cor}[thm]{Corollary}
\newtheorem{thmintro}{Theorem}
\theoremstyle{definition}
\theoremstyle{remark}
\newcommand{\B}[1]{\overline{#1}}
\newcommand{\C}{\mathbb{C}}
\newcommand{\R}{\mathbb{R}}
\newcommand{\rb}{\rangle}
\newcommand{\lb}{\langle}
\newcommand{\bgm}[2]{\lb#1,#2\rb}
\newcommand{\U}{\mathsf{U}}
   \def\MR#1{}
\title{Hermitian manifolds with flat Gauduchon connections}
\author{Ramiro A.~ Lafuente}
\address{School of Mathematics and Physics, The University of Queensland, St Lucia QLD 4072, Australia}
\email{r.lafuente@uq.edu.au}
\author{James Stanfield}
\address{School of Mathematics and Physics, The University of Queensland, St Lucia QLD 4072, Australia}
\email{james.stanfield@uq.net.au}
\begin{document}

	\begin{abstract}
	We complete the classification of compact Hermitian manifolds admitting a flat Gauduchon connection. In particular, we establish a conjecture of Yang and Zheng, showing that apart from the cases of a flat Chern or Bismut connection, such manifolds are K\"ahler. More generally, we prove the same result holds when the flatness assumption is replaced by the so-called K\"ahler-like condition, proving a conjecture of Angella, Otal, Ugarte and Villacampa. We also treat the non-compact case.

	\end{abstract}

	\maketitle

	\section{introduction}
	Let $(M,J,g)$ be a Hermitian manifold. Unless $g$ is K\"ahler, the complex structure $J$ is not parallel with respect to the Levi--Civita connection. It is therefore more natural to consider \emph{Hermitian connections}: those for which $g$ and $J$ are parallel.
	In general,  Hermitian connections form an infinite dimensional affine space. However, by imposing natural representation-theoretic constraints on the torsion, Gauduchon  identified  in \cite{GauduchonHermitianConnections} a distinguished line of ``canonical'' Hermitian connections, which we will call \emph{Gauduchon connections}, given by
	\begin{equation}
		\label{eqn:sNabla}
		\left\{\nabla^s :=  (1- \tfrac{s}{2}) \, \nabla^c + \tfrac{s}{2} \, \nabla^b \,\, : \,\, s \in \R\right\}.
	\end{equation}
	Here, $\nabla^c = \nabla^0$ and $\nabla^b=\nabla^2$ are respectively the Chern and Strominger-Bismut \cite{Bis89} connections of $(M,J,g)$. Recall that the Chern connection is the unique Hermitian connection for which the $(1,1)$-part of the torsion vanishes, and the Bismut connection is the unique Hermitian connection with totally skew-symmetric torsion. 
	
	When $(M,J,g)$ is K\"ahler, all Gauduchon connections coincide with the Levi--Civita connection $\nabla^{\rm LC}$. On the other hand, they are pairwise distinct on non-K\"ahler Hermitian manifolds. Remarkably, Gauduchon's line  also includes other distinguished connections, such as the \emph{Lichnerowicz connection} \cite{Lic57} $\nabla^1 = \frac{1}{2}\left(\nabla^{\operatorname{LC}} - J\nabla^{\operatorname{LC}}J\right)$, the Hermitian part of the Levi--Civita connection (sometimes also called the \emph{first canonical}, or \emph{associated} connection); Liebermann's \emph{conformal connection} \cite{Lib54} $\nabla^{\frac12}$ whose torsion satisfies the Bianchi identity; and the \emph{minimal connection} $\nabla^{\frac{2}{3}}$ \cite{GauduchonHermitianConnections},  whose torsion has minimal norm among all Hermitian connections.


	A  fundamental question in non-K\"ahler Hermitian geometry due to  S.T.~Yau \cite[Problem 87]{Yau93}, asks  whether one can say something nontrivial about a compact Hermitian manifold whose holonomy is a proper  subgroup of $\U(n)$. Yau remarks that the problem is that the connection need not be the Levi-Civita connection. It seems natural to consider this problem for Gauduchon connections. In this direction, our main result answers Yau's question in the case of a discrete holonomy group:
	\begin{thmintro}\label{thm_main}
		Any compact Hermitian manifold with a flat Gauduchon connection  $\nabla^s$  is K\"ahler, unless $s= 0$ or $2$.
	\end{thmintro}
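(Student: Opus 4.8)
The plan is to reduce flatness of $\nabla^s$ to a closed system of equations on the Chern torsion and then eliminate that torsion by a Bochner-type argument exploiting compactness. Writing $\nabla^s = \nabla^c + \tfrac{s}{2}\,\eta$, where the difference tensor $\eta = \nabla^b - \nabla^c$ depends linearly on the Chern torsion $T^c$ (equivalently on $\partial\omega$), the standard formula for the curvature of a modified connection gives
\begin{equation*}
 R^s_{X,Y} = R^c_{X,Y} + \tfrac{s}{2}\big[(\nabla^c_X\eta)_Y - (\nabla^c_Y\eta)_X + \eta_{T^c(X,Y)}\big] + \tfrac{s^2}{4}[\eta_X,\eta_Y],
\end{equation*}
which is a quadratic polynomial in $s$. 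First I would fix a local unitary frame and decompose the equation $R^s = 0$ into its pure-type and mixed-type components. The mixed component $R^s_{i\bar\jmath k\bar l} = 0$ should express the Chern curvature $R^c$ in terms of quadratic expressions in $T^c$, while the remaining components encode first-order information about $\nabla^c T^c$. Since $R^s=0$ means the holonomy of $\nabla^s$ is discrete, one may equivalently pass to the universal cover and work in a global $\nabla^s$-parallel unitary frame, in which all connection terms for $\nabla^s$ drop out and the structure equations deliver the torsion and its derivatives algebraically.

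The next step is to isolate the first-order constraint cleanly using the first Bianchi identity for $\nabla^s$. Since $R^s=0$, this reads $\sum_{\mathrm{cyc}}\big[(\nabla^s_X T^s)(Y,Z) + T^s(T^s(X,Y),Z)\big] = 0$, and after translating the torsion $T^s$ and the derivative $\nabla^s$ back into Chern data it yields an identity expressing the symmetrized covariant derivative $\nabla^c T^c$ in terms of quadratic torsion terms, with coefficients depending on $s$. The crucial structural point I would aim to exploit is that these $s$-dependent coefficients degenerate precisely at $s=0$ and $s=2$; this must be so, since Chern-flat and Bismut-flat manifolds genuinely need not be K\"ahler, and any correct identity has to be compatible with those exceptions (for instance a coefficient polynomial such as $s(s-2)$, which at the Lichnerowicz value $s=1$ is nonzero, as required).

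Finally I would integrate over the compact manifold $M$. Contracting the flatness equations to a global scalar or $(1,1)$-form identity and integrating by parts — using $R^s=0$ to discard all curvature terms — I expect to reach an identity of the schematic form $\int_M |\nabla^c T^c|^2\,dV = p(s)\int_M Q(T^c)\,dV$, where $Q$ is a quartic expression in the torsion and $p$ vanishes exactly at $s\in\{0,2\}$. Forcing the right-hand side to have a definite sign for $s\notin\{0,2\}$ then gives $T^c\equiv 0$, i.e.\ $d\omega=0$, so that $g$ is K\"ahler. The hard part will be precisely this last sign analysis: the quartic term $Q(T^c)$ must be reorganized, using the type decomposition of $T^c$ together with the algebraic relations supplied by the mixed-type flatness equations, into a manifestly signed combination, and one must confirm that the resulting coefficient does not change the conclusion away from $s=0,2$. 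Controlling these quadratic-in-torsion contributions, rather than the linear first-order terms, is where the genuine work lies.
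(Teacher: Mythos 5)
Your overall skeleton --- use the first Bianchi identity with $R^s=0$ to express $\nabla^s T$ quadratically in $T$, then contract and exploit compactness --- starts out parallel to the paper, but the final step as you describe it has a genuine gap, and it is exactly the gap that kept this problem open. You propose to reach an integrated identity of the form $\int_M |\nabla^c T^c|^2 = p(s)\int_M Q(T^c)$ and then to conclude by ``forcing the right-hand side to have a definite sign for $s\notin\{0,2\}$.'' This sign/definiteness strategy is essentially the approach of Yang--Zheng and Zhao--Zheng, and it is known to work only for $s$ outside the interval $(4-2\sqrt{3},1)\cup(1,4+2\sqrt{3})$: inside that range the quartic expression $Q(T^c)$ simply does not organize into a manifestly signed combination, and no choice of contraction fixes this. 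You flag this as ``the hard part'' but supply no mechanism for it, so the proof does not close. Relatedly, your structural claim that the $s$-dependent coefficients ``degenerate precisely at $s=0$ and $s=2$'' is false: in the paper's analysis the relevant determinant is $64\,s^8(s-2)^3(s-1)^3(2s-1)^3(3s-2)^2(5s-4)$, which also vanishes at $s=\tfrac12,\tfrac23,\tfrac45,1$, and each of these values needs a separate argument.

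The paper's actual mechanism avoids sign considerations entirely. It fixes four specific scalar quantities --- three quartic contractions $A$, $B$, $C$ of $T\otimes T\otimes\overline{T}\otimes\overline{T}$ and the first-order quantity $\bgm{\partial|\eta|^2}{\overline\eta}$, where $\eta$ is the torsion $(1,0)$-form (note your $\eta$ denotes something else) --- and derives from the curvature symmetries of $R^s$ applied to $(\nabla^s)^2T$ a homogeneous $4\times4$ \emph{linear} system in these quantities, with coefficients polynomial in $s$. Nonvanishing of the determinant forces $C=|U|^2=0$ pointwise, hence $\eta\equiv0$ and then $T\equiv0$ by a trace identity; this already proves the result for all $s\notin\{0,\tfrac12,\tfrac23,\tfrac45,1,2\}$ with no compactness and no integration. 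Compactness enters only for $s=\tfrac23,\tfrac45$: at a maximum of $|\eta|^2$ one has $\bgm{\partial|\eta|^2}{\overline\eta}=0$, the system becomes overdetermined in $A,B,C$, and $C=0$ still follows. The cases $s=\tfrac12,1$ are quoted from Zhao--Zheng. To repair your argument you would need to replace the sign analysis with some such exact elimination; as written, the conclusion $T^c\equiv0$ does not follow for the critical middle range of $s$.
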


	This also yields an affirmative answer to a conjecture by Yang  and Zheng (\cite[~Conjecture 1.1]{YangZhengGuaduchonFlat}).  Moreover, our proof only uses the so-called \emph{K\"ahler-like} condition (see \eqref{eqn_Bianchi0} and \eqref{eqn_type} below) and thus establishes a much more general conjecture proposed by Angella, Otal, Ugarte and Villacampa (\cite[Conjecture 2]{AOUV}), see Theorem \ref{thm_main_body} for the precise statement.   Recall that the cases $s=0, 2$ are well-understood. By a classical result of Boothby \cite{Boothby58} a compact Chern-flat Hermitian manifold is covered by a simply-connected complex Lie group with left-invariant Hermitian metric. On the other  hand, Wang, Yang and Zheng showed in \cite{WangYangZhengBismutFlat} that the universal cover of a  Bismut-flat Hermitian manifold is an open complex submanifold of a \emph{Samelson space}, that is, a Lie group with left-invariant complex structure and bi-invariant metric.

	Theorem \ref{thm_main} was previously known when $n=2$ (\cite[~Theorem 1.5]{YangZhengGuaduchonFlat}), and for arbitrary dimension under the additional assumption that $s \notin (4-2\sqrt{3},1)\cup(1,4+2\sqrt{3})$ (\cite[~Theorem 1.6]{YangZhengGuaduchonFlat} and \cite[~Theorem 1]{ZhaoZhengKahlerLike}; see also \cite[Corollary 1.5]{FZ19}). It was also essentially known when $g$ is  locally conformally K\"ahler \cite[~Theorem 1.7]{YangZhengGuaduchonFlat}, or \emph{balanced} \cite[~Proposition 1.8]{YangZhengGuaduchonFlat}.  Hermitian manifolds with flat Lichnerowicz connection were previously  studied in \cite{ABD12,GK06}.

	It is worthwhile mentioning that the flatness assumption cannot be relaxed to the vanishing of the (first) Ricci form: for each $s\in \R$ there exist non-K\"ahler Hermitian manifolds with Ricci-flat Gauduchon connection $\nabla^s$. Indeed, by \cite{Vez13}, on a balanced Hermitian nilmanifold the Ricci form of $\nabla^s$ vanishes for all $s$. Some explicit examples in complex dimension $3$ are provided in \cite{FPS04}. Note that in general, different contractions of the curvature of $\nabla^s$ give rise to different Ricci-type tensors. It would be interesting to find non-K\"ahler examples of Hermitian manifolds for which these other Ricci curvatures vanish.

	Regarding  compactness, recall that Boothby's result \cite{Boothby58} for Chern-flat manifolds does not extend to the non-compact case. Thus, it is natural to ask whether there exist non-compact, non-K\"ahler Hermitian manifolds with flat Gauduchon connections $\nabla^s$, $s\notin\{0,2\}$. In this direction, we first remark that our proof of \Cref{thm_main} only uses compactness to guarantee the existence of a maximum for a certain natural real-valued function on $M$ (see the discussion of the proof below). Thus, we immediately deduce an analogous statement to \Cref{thm_main} for non-compact homogeneous Hermitian manifolds, and more generally for manifolds where the group of biholomorphic isometries has a compact fundamental domain: see \Cref{cor_cocompact_symmetries}. This generalises the main result in \cite{VezzoniYangZhengLieGroups}, where the authors assume homogeneity and the fact that $\nabla^s$ has parallel torsion.

	Even more generally, without any symmetry assumptions, our main result in the non-compact case is: 

	\begin{thmintro}
		\label{thm_B}
		Any non-compact Hermitian manifold with a flat Gauduchon connection  $\nabla^s$ with $s\notin\{ 0, \tfrac23 , \tfrac45, 2 \}$  is K\"ahler. 
	\end{thmintro}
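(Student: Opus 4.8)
The plan is to treat \Cref{thm_B} as an essentially local statement: both the flatness of $\nabla^s$ and the K\"ahler condition are local, the latter being equivalent to the vanishing of the torsion $T=T^s$ of $\nabla^s$ (indeed a metric, torsion-free connection is the Levi--Civita connection, and $\nabla^s J=0$ then forces $\nabla^{\mathrm{LC}}J=0$). Accordingly, since a general non-compact manifold need not be complete, I would abandon the maximum principle of the compact case and argue pointwise. As $\nabla^s$ is flat and Hermitian, about any point there is a $\nabla^s$-parallel local frame in which $g$ and $J$ have constant coefficients, so the entire local geometry is encoded in $T$, and the goal is to prove $T\equiv 0$.

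The main input is the first Bianchi identity for $\nabla^s$, which, since $R^s=0$, reduces to
\begin{equation*}
\sum_{\mathrm{cyc}} (\nabla^s_X T)(Y,Z) \;=\; \sum_{\mathrm{cyc}} T\bigl(T(X,Y),Z\bigr).
\end{equation*}
I would then decompose this identity according to the $(p,q)$-type splitting induced by $J$, which $\nabla^s$ preserves. Using the interpolation $\nabla^s=(1-\tfrac{s}{2})\nabla^c+\tfrac{s}{2}\nabla^b$ to express $T$ and $\nabla^s T$ through the Chern torsion and $d\omega$, each type-component becomes a relation of the schematic form $a(s)\,(\nabla^s T)=b(s)\,(T\ast T)$ with $a(s),b(s)$ explicit in $s$. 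Complementing these with the vanishing of the various contractions of $R^s$ (in particular the $(1,1)$-part, which governs the divergence of the torsion) should yield a closed linear system whose solution produces a pointwise identity $c(s)\,Q(T)=0$, where $Q$ is a definite quadratic in $T$ and $c(s)$ is a rational function of $s$. Whenever $c(s)\neq 0$ this forces $T\equiv 0$, and I expect $c(s)$ to vanish exactly at $s\in\{0,\tfrac23,\tfrac45,2\}$, precisely the excluded set.

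The main obstacle, and the source of the two additional exceptional values $\tfrac23$ and $\tfrac45$ beyond the Chern and Bismut cases $s=0,2$, is that this purely local/algebraic route is genuinely weaker than the global argument behind \Cref{thm_main}: the maximum principle only requires a single scalar coefficient to be nonzero and hence tolerates all $s\neq 0,2$, whereas the pointwise scheme must invert an entire linear system whose determinant carries extra roots. Concretely, the delicate part is the bookkeeping of the type decomposition of both $\nabla^s T$ and the quadratic term $T\ast T$, and the verification that the resulting coefficient factors (up to a nonzero normalisation) as $c(s)=s(s-2)(3s-2)(5s-4)$, so that it is nonvanishing precisely on the stated range of $s$.
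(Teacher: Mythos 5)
Your skeleton is the right one and matches the paper's: the statement is local, the first Bianchi identity with $R^s=0$ turns $\nabla^s T$ into a quadratic expression $Q_s(T)$ in the Chern torsion, and one then assembles a homogeneous linear system whose non-degeneracy away from an exceptional set of $s$ forces vanishing. But as written the sketch has three genuine gaps. The first and most important is that the first-order Bianchi relations alone do not close up: no amount of type-decomposing $\sum_{\mathrm{cyc}}(\nabla^s T)(Y,Z)=\sum_{\mathrm{cyc}}T(T(X,Y),Z)$ produces a pointwise identity of the form $c(s)\,Q(T)=0$ with $Q$ definite. The paper must go to \emph{second} order: differentiate $\nabla^s T=Q_s(T)$ once more and use the commutation formula $(\nabla^s)^2_{X,Y}-(\nabla^s)^2_{Y,X}=R^s(X,Y)-\nabla^s_{T^s(X,Y)}$ together with $R^s=0$ applied to $T$ and $\eta$ (\Cref{cor:secondDerivComm}, \Cref{lem:etaSecondDeriv}). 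Only after contracting the resulting expressions against $\overline{\eta}$ in four independent ways does one obtain the linear system \eqref{eqn_linearsystem}, whose unknowns are the \emph{quartic} scalars $A,B,C$ of \eqref{eqn:ABC} and the derivative term $\langle\partial|\eta|^2,\overline{\eta}\rangle$ --- not a quadratic in $T$. Your proposal contains no mechanism for generating these second-order identities.

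The second gap is the conclusion step: what the non-singular system kills is $C=|U|^2$ with $U=\tfrac12 T(\cdot,\overline{\eta^\sharp})$, which is definite in $U$ but not in $T$; it yields only $U=0$, hence $|\eta|^2=\operatorname{tr}U=0$, i.e.\ the metric is balanced. Passing from $\eta\equiv 0$ to $T\equiv 0$ requires the separate trace identity $2(2s-1)\,\eta_{r,\bar r}=s^2|T|^2+s(2-3s)|\eta|^2$ of \Cref{cor:etaIdentities}, valid only because $s\neq 0$; ``balanced implies K\"ahler'' is an extra step here, not automatic. Third, your predicted obstruction $c(s)=s(s-2)(3s-2)(5s-4)$ is missing roots: the actual determinant is $64\,s^8(s-2)^3(s-1)^3(2s-1)^3(3s-2)^2(5s-4)$, so the linear-system route also degenerates at $s=\tfrac12$ and $s=1$, both of which lie inside the range claimed by \Cref{thm_B} and must be disposed of by separate arguments (for $s=\tfrac12$ the first row of \eqref{eqn_linearsystem} already reads $C=0$; for $s=1$ one needs the independent fact that $\eta=0$, as in \cite{ZhaoZhengKahlerLike}). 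Without addressing these two values your argument does not prove the theorem as stated.
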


	As with \Cref{thm_main}, \Cref{thm_B} holds when flatness is replaced by the K\"ahler-like assumption (again, see \Cref{thm_main_body}). Regarding the remaining possible exceptions for $s$, we remind the reader that $\nabla^{2/3}$ is the minimal connection. On the other hand, we are not aware of any geometric significance for $\nabla^{4/5}$. It would be interesting to find non-K\"ahler examples of Hermitian manifolds on which either of these connections is flat (or K\"ahler-like). Moreover, Hermitian connections beyond the Gauduchon line could also be of interest. In particular, it would be natural to ask if a given non-K\"ahler manifold can admit a flat Hermitian connection (other than the Chern or Bismut connections).

	We now briefly summarise the main ideas involved in the proof of Theorems \ref{thm_main} and \ref{thm_B}. Let $T$ be the torsion tensor of the Chern connection and $\eta \in \Omega^{1,0}(M)$ the \emph{torsion $(1,0)$-form} given (up to scaling) by the $(1,0)$ part of the trace of $T$. As observed in \cite{YangZhengGuaduchonFlat,ZhaoZhengKahlerLike}, flatness (or more generally K\"ahler-likeness) of $\nabla^s$ together with the general Bianchi identity imply that $\nabla^sT = Q_s(T)$, where $Q_s(T)$ is a quadratic expression in the torsion depending on $s$. Denoting by $R^s = [\nabla^s_{\cdot},\nabla^s_{\cdot}] - \nabla^s_{[\cdot,\cdot]}$ the curvature of $\nabla^s$, it follows that $R^s(\cdot,\cdot)T = C_s(T)$ is a tensor which is cubic in the torsion. We then consider four $\C$-valued functions on $M$: $\bgm{\partial |\eta|^2}{\B \eta}$, $A,B,C \colon M \to \C$, where $A,B$ and $C$ are contractions of $T\otimes T\otimes T \otimes T$ (see: \Cref{eqn:ABC}). The condition $R^s = 0$ (or $R^s$ K\"ahler-like) applied to contractions of the tensor $R^s(\cdot,\cdot)T\otimes \B{\eta}$ yields four identities, linear in $A,B,C$ and $\bgm{\partial|\eta|^2}{\B \eta}$. The linear system formed is non-singular for $s \notin \{0,1/2,2/3,4/5,1,2\}$, and for such cases we find in particular that $C = 0$ which is enough to conclude that $g$ is K\"ahler. The cases $s \in \{1/2,1\}$ are dealt with in a straightforward manner and in fact were treated in \cite{ZhaoZhengKahlerLike}. This proves \Cref{thm_B}. \Cref{thm_main} follows by considering a point $p \in M$ where the function $|\eta|^2$ attains a maximum. At such a point, $\bgm{\partial |\eta|^2}{\B \eta} = 0$ and so the previously formed linear system is overdetermined, which forces $C = 0$ at $p$. It follows that $\eta \equiv 0$, which is again enough to conclude that $T \equiv 0$ and hence $g$ is K\"ahler.

	\subsubsection*{Acknowledgements}
	The first-named author was supported by an Australian Research Council DECRA fellowship (project ID DE190\-101063). The second named author was supported by an Australian Government Research Training Program (RTP) Scholarship.

\section{Preliminaries}
	\label{sec:Prelim}
	
	In this section we fix our notation and recall some important results.  Let $(M^{2n},J,g)$ be a Hermitian manifold and for each $s\in \R$, let $\nabla^s$ denote the $s$-Gauduchon canonical connection defined in \eqref{eqn:sNabla}. Denote by $\omega := g(J\cdot,\cdot)$ the fundamental form associated to the Hermitian structure.  Suppose $U\subset M$ is open and $\{e_i\}_{i=1}^n\subset \Gamma(U,T^{1,0}M)$ is a local unitary (1,0)-frame, and let $\{e^i\}_{i=1}^n \subset \Omega^{1,0}(U)$ be the associated dual frame. In the rest of the article, we adopt Einstein's summation convention over repeated indices. A comma followed by an index will always denote covariant differentiation with respect to $\nabla^s$ in the direction of the corresponding basis vector.

	Let $T^s(X,Y) := \nabla^s_{X}{Y} - \nabla^s_{Y}{X} - [X, Y]$ be the torsion of $\nabla^s$. We reserve the notation $T:= T^0$ for the torsion of the Chern connection. In our fixed local unitary frame, $T$ is given by 
	\[
			T(e_i, e_j) = 2T_{ij}^k e_k, \qquad T(\B{e_i}, \B{e_j}) = 2\B{T_{ij}^k} \B{e_k}.
	\]
	where the functions $T_{ij}^k : U \to \C$ are defined via
	\[
	T_{ij}^k := \frac{1}{2}\bgm{T(e_i,e_j)}{\B{e_k}}\colon U \to \mathbb{\C};\qquad 1\leq i,j,k\leq n.
	\]
	(The $1/2$ factor reflects our wedge product convention: $\alpha \wedge \beta = \alpha \otimes \beta - \beta \otimes \alpha$.)
	The \emph{Lee form} of $(M,J,g)$ is the 1-form  $\theta \in \Omega^1(M)$ defined via 
	\[
		d \omega^{n-1} = \theta \wedge \omega^{n-1}.
	\]
	Inspired by \cite{Mich82}, but following the sign conventions in \cite{YangZhengGuaduchonFlat}, we define the torsion $(1,0)$-form $\eta := -\tfrac12 \theta^{1,0}$, which in  terms of the unitary frame  is given by
	\[
	\eta =\eta_je^j = T_{kj}^ke^j \in \Omega^{1,0}. 
	\]
	
	By viewing the torsion as a $TM$-valued $2$-form,  $J$ induces on it a type decomposition as usual: 
	\[
		T^s = (T^s)^{2,0} + (T^s)^{1,1} + (T^s)^{0,2}.
	\]
	Since the $(0,2)$-part of the torsion of any Hermitian connection is the Nijenhuis tensor \cite{GauduchonHermitianConnections}, we have $(T^s)^{0,2} = 0$ for all $s\in \R$.  Moreover, from the well-known facts $(T^0)^{1,1} = (T^1)^{2,0} = 0$, $T^2 \in \Omega^3 M$ \cite{GauduchonHermitianConnections}, we may compute $T^s$ in terms of $T = T^0$:

	\begin{lem}
	\label{lem:torsionComponents}
		The torsion $T^s$ of $\nabla^s$ may be recovered from $T$ by
		\[
			T^s(e_i,e_j)= 2(1-s)T_{ij}^ke_k,\qquad T^s(\B{e_i},e_j) = s \, \B{T^j_{ik}}e_k - s \, T^i_{jk}\B{e_k}, \qquad 1\leq i,j \leq n.
		\]
	\end{lem}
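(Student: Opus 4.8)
The plan is to exploit that the torsion depends affine-linearly on the connection, and then to read off each type-component separately. Since $\nabla^s = (1-\tfrac s2)\nabla^c + \tfrac s2 \nabla^b$ is an affine combination of $\nabla^0$ and $\nabla^2$, and in $T^s(X,Y) = \nabla^s_X Y - \nabla^s_Y X - [X,Y]$ the bracket term ends up with total coefficient $(1-\tfrac s2) + \tfrac s2 - 1 = 0$, I would first record that
\[
	T^s = (1-\tfrac s2)\, T^0 + \tfrac s2\, T^2 .
\]
Consequently each of the $(2,0)$, $(1,1)$ and $(0,2)$ components of $T^s$ is the same affine combination of the corresponding components of $T^0$ and $T^2$, so it suffices to pin those down. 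Note that $T^s(e_i,e_j)$ only sees the $(2,0)$-part and $T^s(\B{e_i}, e_j)$ only sees the $(1,1)$-part, since a $(1,1)$- or $(0,2)$-form vanishes on a pair of $(1,0)$-vectors and a $(2,0)$- or $(0,2)$-form vanishes on a mixed pair.

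For the $(2,0)$-part I would feed the identity $(T^1)^{2,0}=0$ into the relation above at $s=1$, giving $\tfrac12 (T^0)^{2,0} + \tfrac12 (T^2)^{2,0} = 0$, hence $(T^2)^{2,0} = -(T^0)^{2,0}$. Substituting back yields $(T^s)^{2,0} = (1-s)(T^0)^{2,0}$, and evaluating on $(e_i,e_j)$ with $T^0(e_i,e_j) = 2T_{ij}^k e_k$ produces the first formula. The remaining work is the $(1,1)$-part, which is where I expect the real content to lie: since $(T^0)^{1,1}=0$, linearity gives $(T^s)^{1,1} = \tfrac s2 (T^2)^{1,1}$, so I must compute the $(1,1)$-part of the Bismut torsion.

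The hard part is therefore to determine $(T^2)^{1,1}$ from the $(2,0)$-information already in hand, and the tool for this is total skew-symmetry: writing $H := \bgm{T^2(\cdot,\cdot)}{\cdot}$, the fact that $T^2 \in \Omega^3 M$ means $H$ is a genuine $3$-form, so its arguments may be permuted freely. From $(T^2)^{2,0} = -(T^0)^{2,0}$ being $(1,0)$-valued I would extract $H(e_i,e_j,\B{e_k}) = -2T_{ij}^k$ together with the vanishing of the $(3,0)$-part, and conjugate to get $H(\B{e_i}, \B{e_j}, e_k) = -2\B{T_{ij}^k}$. The two value-components of $T^2(\B{e_i}, e_j)$ are then $\bgm{T^2(\B{e_i}, e_j)}{\B{e_k}} = H(\B{e_i}, e_j, \B{e_k})$ and $\bgm{T^2(\B{e_i}, e_j)}{e_k} = H(\B{e_i}, e_j, e_k)$, and a single transposition of the arguments of $H$ identifies these with $2\B{T^j_{ik}}$ and $-2T^i_{jk}$ respectively; multiplying by $\tfrac s2$ gives the second formula. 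The only delicate point I anticipate is keeping the index positions and the $(1,0)/(0,1)$ splitting of the torsion values straight while permuting the arguments of $H$, using the unitary-frame relations $\bgm{e_l}{\B{e_k}} = \delta_{lk}$ and $\bgm{e_l}{e_k}=0$.
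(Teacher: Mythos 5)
Your argument is correct and follows essentially the same route as the paper: the $(2,0)$-part is pinned down via the affine dependence of the torsion on the connection together with $(T^1)^{2,0}=0$, and the $(1,1)$-part via $(T^0)^{1,1}=0$ and the total skew-symmetry of the Bismut torsion. The only difference is cosmetic — you spell out both value-components of $T^2(\B{e_i},e_j)$ through the $3$-form $H$, whereas the paper computes only the $e_k$-coefficient and leaves the conjugate component implicit.
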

	\begin{proof}
		By definition, $\nabla^s = (1-s/2) \, \nabla^0 + (s/2) \,  \nabla^2 =  (1-s) \, \nabla^0 + s\nabla^1$. From $(T^1)^{2,0} = 0$ we deduce $(T^s)^{2,0} = (1-s)T$, which proves the first claim. 

		For the second part, we use $T^{1,1} = 0$  and the fact that $T^2$ is totally skew-symmetric to get
		\[
		\bgm{T^s(\B{e_i},e_j)}{\B{e_k}} = \frac{s}{2}\bgm{T^2(\B{e_i},e_j)}{\B{e_k}} = -\frac{s}{2}\bgm{T^2(\B{e_i},\B{e_k})}{e_j} = \frac{s}{2}\B{\bgm{T(e_i,e_k)}{\B{e_j}}} = s \, \B{T^j_{ik}},
		\]
		as claimed.
	\end{proof}

	Following \cite{AOUV},  we call a Gauduchon connection $\nabla^s$ \emph{K\"ahler-like} if its curvature tensor $R^s$ has the same symmetries as in the K\"ahler case. Namely, it satisfies the (torsion-less) first Bianchi identity
	\begin{equation}\label{eqn_Bianchi0}
		R^s(X,Y)Z + R^s(Y,Z)X + R^s(Z, X) Y = 0,  \qquad \hbox{for all } X, Y, Z \in TM,
	\end{equation}
	and the so-called type condition 
	\begin{equation}\label{eqn_type}
		R^s(JX,JY) = R^s(X,Y),\qquad \hbox{for all } X,Y \in TM.
	\end{equation}
	Equivalently, $\nabla^s$ is K\"ahler-like if $R^s(Z,W, \cdot ,\cdot ) = R^s(\cdot, \cdot, Z, W) = 0$ and $R^s(Z,\cdot,W,\cdot) = R^s(W,\cdot,Z,\cdot)$ for all $Z,W\in T^{1,0} M$ (here $R^s(\cdot,\cdot,\cdot,\cdot) = g(R^s(\cdot,\cdot) \cdot, \cdot)$). Notice that a flat connection is trivially K\"ahler-like.

	The starting point for our computations is the following set of formulas for the covariant derivatives of $T$, which may be found in \cite[Lemmas 2, 3 $\&$ 5]{ZhaoZhengKahlerLike} (notice that, in that article, the parameter $r$ for the Gauduchon line is related to ours via $r = 1-s$) and \cite[Lemma 3.1]{VezzoniYangZhengLieGroups}. Since our assumptions here are sligthly weaker, for convenience of the reader we indicate how the proof goes, referring to the afforementioned articles for details.

	\begin{lem}[\cite{ZhaoZhengKahlerLike,VezzoniYangZhengLieGroups}]
		\label{lem:torsionIdentities}
		Let $(M^{2n},J,g)$ be a Hermitian manifold with Gauduchon connection $\nabla^s$, $s\neq 0$, satisfying the torsion-less Bianchi identity \eqref{eqn_Bianchi0}. Then, in any local unitary frame $\{e_i\}_{i=1}^n$ we have:
		\begin{enumerate}
			\item  $T^k_{ij,l} = -(s-2) \, T^r_{ij}T^k_{rl}$ \label{lem:torsionIdentitiesNoBar} ,
			\item $0= (s-1)\left(T_{j k}^{r}T_{i r}^{l}+ T_{k i}^{r}T_{j r}^{l}+T_{i j}^{r}T_{k r}^{l}\right)$ \label{lem:torsionIdentitiesCyclic},
			\item $\begin{aligned} 
				c \, T_{i j, \bar{l}}^{k} = a \, T_{i j}^{r} \B{T_{k l}^{r}}+b\left(T_{i r}^{k} \B{T_{l r}^{j}}-T_{j r}^{k} \B{T_{l r}^{i}}\right)+ s^{3}\left(T_{i r}^{l} \B{T_{k r}^{j}}-T_{j r}^{l} \B{T_{k r}^{i}}\right) ,
				\end{aligned}$  \label{lem:torsion}
		\end{enumerate}
		for all  $1\leq i,j,k,l \leq n$. Here a comma denotes covariant differentiation with respect to $\nabla^s$, and
		\[
		a = a(s) := -4s(s-1)^2,\quad b = b(s) :=-s(5s^2-10s+4), \quad c = c(s) := 4(s-1)(2s-1).
		\]
	\end{lem}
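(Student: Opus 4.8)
The plan is to feed the hypothesis into the \emph{first Bianchi identity for a connection with torsion}. For any linear connection with torsion $T^s$ and curvature $R^s$,
\[
\mathfrak{S}_{X,Y,Z}\, R^s(X,Y)Z = \mathfrak{S}_{X,Y,Z}\Big[(\nabla^s_X T^s)(Y,Z) + T^s\big(T^s(X,Y),Z\big)\Big],
\]
where $\mathfrak{S}$ denotes the cyclic sum. The torsion-free Bianchi identity \eqref{eqn_Bianchi0} kills the left-hand side, so the whole content of the lemma is that the right-hand side vanishes identically. The first step is to insert the formulas of \Cref{lem:torsionComponents}, turning each summand into either a $\nabla^s$-derivative of the Chern torsion $T$ or a quadratic contraction of $T$ with itself, with coefficients polynomial in $s$.

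I would then evaluate on type-homogeneous frame triples and split the resulting vector-valued identity into its $T^{1,0}$- and $T^{0,1}$-components. For three holomorphic vectors $(e_i,e_j,e_l)$ the torsion only sees its $(2,0)$-part, so the identity stays in $T^{1,0}$ and produces a single cyclic relation $(\mathrm{I})$ between $\mathfrak{S}\,T^k_{ij,l}$ and the cyclic quadratic $\mathfrak{S}\,T^r_{ij}T^k_{rl}$. For the mixed triple $(e_i,e_j,\B{e_l})$ one must use the $(1,1)$-part of $T^s$; its $T^{0,1}$-component gives (after normalising by the overall factor $s$, where the hypothesis $s\neq0$ enters) a relation $(\mathrm{II})$ containing only holomorphic derivatives $T^k_{ij,l}$ and quadratics, while its $T^{1,0}$-component gives a relation $(\mathrm{III})$ in which the wanted antiholomorphic derivatives $T^k_{ij,\bar l}$ occur alongside \emph{conjugated} derivatives of the form $\overline{T^{a}_{bc,\bar d}}$.

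For the first two identities, I would view $T^k_{ij,l}$ (with $k$ fixed) as a tensor in the lower indices, antisymmetric in $(i,j)$; under the $S_3$-action on $(i,j,l)$ it splits into a totally antisymmetric part, fixed by $(\mathrm{I})$, and a mixed-symmetry part, fixed by $(\mathrm{II})$. Reassembling the two pieces and repeatedly using $T^k_{ij}=-T^k_{ji}$ solves for the whole tensor and yields the closed formula in part (1); the compatibility of this over-determined data — equivalently, comparison with the totally antisymmetric part of $(\mathrm{I})$ — forces the cyclic quadratic to vanish, which is part (2) (a Jacobi-type relation, with the factor $s-1$ recording that $s=1$ is degenerate). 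The division by $1-s$ used in $(\mathrm{I})$ confirms that $s=1$ must be set aside and treated by hand.

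The delicate point is part (3), because $(\mathrm{III})$ is not yet a formula for $T^k_{ij,\bar l}$: the terms $\overline{T^{a}_{bc,\bar d}}$ are holomorphic derivatives of the \emph{conjugate} torsion, that is, exactly the quantities controlled by the complex conjugate of part (3). I would therefore pair $(\mathrm{III})$ with its complex conjugate and, after matching up the free indices, treat them as a linear system in $T^k_{ij,\bar l}$ and its conjugate, eliminating the latter; the determinant of this elimination is $c(s)=4(s-1)(2s-1)$, which is why (3) is written with $c$ on the left rather than after dividing through, and why its roots $s=\tfrac12,1$ are exactly the degenerate values handled separately. The coefficients $a(s)$ and $b(s)$ drop out of the same computation. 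The main obstacle is precisely this bookkeeping — carrying the conjugate-derivative terms correctly through the elimination so that the polynomial coefficients land on the stated $a,b,c$ — whereas the representation-theoretic step for parts (1)–(2) is conceptually clean once the two projections are assembled.
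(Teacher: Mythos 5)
Your proposal follows essentially the same route as the paper: feed the hypothesis into the torsion Bianchi identity, insert \Cref{lem:torsionComponents}, evaluate on the pure and mixed $(1,0)$-triples, solve the resulting linear relations for the holomorphic derivatives (items (1)--(2)), and obtain item (3) by eliminating the conjugated antiholomorphic derivatives against the complex conjugate of the same relation, which is exactly where the factor $c(s)=4(s-1)(2s-1)$ arises. The one inessential divergence is your allocation of the totally antisymmetric part to the pure $(3,0)$ relation: since $(T^1)^{2,0}=0$ that relation is vacuous at $s=1$, whereas the paper extracts $\sum_{\mathrm{cyc}}T^k_{ij,l}$ from the cyclic symmetrisation of the mixed-triple relation (valid for all $s\neq 0$), so that item (1) needs no separate treatment at $s=1$ and item (2) is rendered vacuous there by its $(s-1)$ prefactor.
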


	\begin{proof}
 	Any  connection satisfies a Bianchi identity, which in general involves the torsion (see e.g. \cite[Ch.~III, Thm.~5.3]{KN}). Together with the torsion-less Bianchi assumption \eqref{eqn_Bianchi0} this yields
 	\[
 		\sum_{\sf{cyc}} \left( T^s(T^s(X,Y), Z) + \left( \nabla^s_X T^s \right)(Y,Z) \right) = \sum_{\sf{cyc}} R^s(X,Y) Z = 0,
 	\]
 	where the sum is over a cyclic permutation of $(X,Y,Z)$. 
 	 This identity extends naturally to the complexified tangent space. We first evaluate it on three $(1,0)$ frame vectors $e_i, e_j, e_k$ and use \Cref{lem:torsionComponents} to get
 	 \begin{equation}\label{eqn_30part}
 	 	\sum_{\sf{cyc}(i,j,l)} \left( T_{ij,l}^k +2 (1-s) \, T_{ij}^r T_{rl}^k \right) = 0.
 	 \end{equation}
 	 On the other hand, evaluating on ${e_i}, {e_j}, \B{e_k}$ and equating the $(1,0)$-parts gives
 	 \begin{equation}\label{eqn_10part}
 	 	-s \left( T_{jl,i}^k + T_{li,j}^k \right) = (2s - s^2) T_{ij}^r T_{rl}^k - s^2 \sum_{\sf{cyc}(i,j,l)} T_{ij}^r T_{rl}^k,
 	 \end{equation}
 	 while the $(0,1)$ part is
 	 \begin{equation}\label{eqn_01part}
 	 		2(s-1) T_{ij,\B{k}}^l + s \, \left(\B{T_{kl,\B{i}}^j} - \B{T_{kl,\B{j}}^i }\right) =  -2s(1-s) \left( T_{ij}^r \B{T_{kl}^r} + T_{ri}^l  \B{T_{kr}^j} - T_{rj}^l \B{T_{kr}^i}\right)+ s^2 \left( T_{jr}^k \B{T_{rl}^i} - T_{ir}^k \B{T_{rl}^j}\right). 
 	 \end{equation}
 	 Using \eqref{eqn_10part} for cyclic permutation of the indices $(i,j,l)$ and adding all up we obtain
 	 \[
 	 	2s \sum_{\sf{cyc}(i,j,l)} T_{ij,l}^k = 2s(2s-1) \sum_{\sf{cyc}(i,j,l)} T_{ij}^r T_{rl}^k.
 	 \]
 	 Combining with \eqref{eqn_30part} this immediately implies Items (1) and (2). Item (3) follows from \eqref{eqn_01part} by using the algebraic trick explained in the proof of \cite[Lemma 3.1]{VezzoniYangZhengLieGroups}.
	\end{proof}

	Taking traces in \Cref{lem:torsionIdentities} yields the following useful identities for $\eta$:
	\begin{cor} Under the assumptions of \Cref{lem:torsionIdentities}, for all $1\leq i,k,j,l \leq n$ we have that:
		\label{cor:etaIdentities}
		\begin{enumerate}
			\item $\eta_{k,l} = -(s-2)T^r_{ik}T^i_{rl}$,
			\item $0 = (s-1) \, T^i_{jk}\eta_i $,
			\item $c\, \eta_{j,\B l} = (a - b)\, T^p_{kj} \, \B{T^p_{kl}} + b  \, \eta_p  \, \B{T^j_{lp}} + s^3 \, \left(T^l_{kp}\, \B{T^j_{kp}} - T^l_{jp} \, \B {\eta_p}\right)$,
			\item  \label{cor:etaIdentities:normT^2}
			 $2(2s-1)\, \eta_{r,\B r} = s^2\, |T|^2 + s(2-3s) \, |\eta|^2$,
			\item $c \, \eta_{j,\B l} \, \B {\eta_{j}} = (a - b) \, T^p_{kj} \, \B{T^{p}_{kl}} \, \B{\eta_j}$.
		\end{enumerate}
		In particular, if in addition $(M^{2n}, J, g)$ is  balanced (i.e.~$\eta \equiv 0$) then it must be K\"ahler.
	\end{cor}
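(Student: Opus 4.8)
The plan is to obtain all five identities by taking one and the same trace in the three parts of \Cref{lem:torsionIdentities}: contracting the upper index against the first lower index. The key points are that $\eta_j = T^k_{kj}$ is exactly this trace of $T$, that the trace commutes with $\nabla^s$ (as $\nabla^s$ is Hermitian, so it preserves the unitary frame structure), giving $T^i_{ij,l} = \eta_{j,l}$ and $T^i_{ij,\B l} = \eta_{j,\B l}$, and that the only algebraic input needed is the antisymmetry $T^k_{ij} = -T^k_{ji}$ of the Chern torsion in its lower indices. Item (1) is then simply part (1) with $k=i$. For item (2) I set $l=i$ in part (2): the first summand becomes $(s-1)T^r_{jk}\eta_r$, while the remaining two, $(s-1)(T^r_{ki}T^i_{jr}+T^r_{ij}T^i_{kr})$, cancel after relabelling $i\leftrightarrow r$ in the last term and invoking antisymmetry, leaving exactly $0=(s-1)T^i_{jk}\eta_i$.

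The heart of the computation is item (3), obtained by contracting $k=i$ in part (3). The $a$-term gives $a\,T^p_{kj}\B{T^p_{kl}}$, and the second $b$-summand $-b\,T^i_{jr}\B{T^i_{lr}}$ equals $-b\,T^p_{kj}\B{T^p_{kl}}$ after using antisymmetry of $T$ and $\B T$ in their lower indices; together these produce the coefficient $(a-b)$. The first $b$-summand and the last $s^3$-summand generate the traces $T^i_{ir}=\eta_r$ and $\B{T^i_{ir}}=\B{\eta_r}$, yielding $b\,\eta_p\B{T^j_{lp}}$ and $-s^3 T^l_{jp}\B{\eta_p}$, while the remaining $s^3$-summand is unchanged. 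This matches the claimed expression.

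Items (4) and (5) are contractions of item (3). Tracing over $j=l$ and recognising $\sum_{i,j,k}|T^k_{ij}|^2 = |T|^2$ and $\eta_p\B{\eta_p} = |\eta|^2$, item (3) collapses to $c\,\eta_{r,\B r} = (a-b+s^3)|T|^2 + (b-s^3)|\eta|^2$. Substituting the explicit polynomials, the coefficients factor as $a-b+s^3 = 2s^2(s-1)$ and $b-s^3 = -2s(3s-2)(s-1)$; since $c = 4(s-1)(2s-1)$ carries the same factor $(s-1)$, cancelling it gives item (4) (for $s\neq 1$; the Lichnerowicz case $s=1$, where this trace degenerates to $0=0$, is treated separately and is not needed here). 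For item (5) I contract item (3) against $\B{\eta_j}$: the terms $b\,\eta_p\B{T^j_{lp}}\,\B{\eta_j}$ and $s^3 T^l_{kp}\B{T^j_{kp}}\,\B{\eta_j}$ vanish because $\sum_j T^j_{lp}\eta_j = 0 = \sum_j T^j_{kp}\eta_j$ by item (2) (after conjugating), while $s^3 T^l_{jp}\B{\eta_p}\,\B{\eta_j}$ vanishes since $T^l_{jp}$ is antisymmetric and $\B{\eta_j}\,\B{\eta_p}$ symmetric in $(j,p)$.

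Finally, the balanced claim is immediate from item (4): setting $\eta\equiv 0$ kills the left-hand side and the $|\eta|^2$ term, leaving $0 = s^2|T|^2$, and since $s\neq 0$ this forces $T\equiv 0$, i.e.\ $d\omega=0$, so $g$ is K\"ahler. I expect the only real obstacle to be the bookkeeping: carefully tracking signs through the antisymmetrisations that merge the $a$- and $b$-terms in item (3), and verifying the two polynomial factorisations that expose the common factor $(s-1)$ responsible for the clean form of item (4).
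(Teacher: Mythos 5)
Your proof is correct and follows essentially the same route as the paper's (two-line) proof: all five identities are obtained by tracing the three parts of \Cref{lem:torsionIdentities} over the upper and first lower index, with items (4) and (5) coming from further contracting item (3) over $j=l$ and against $\B{\eta_j}$, using exactly the cancellations you describe. The one caveat you partially flag --- that items (4) and (5) degenerate at $s=1$, where the factor $(s-1)$ in $c$, $a-b+s^3$, $b-s^3$ and in item (2) makes the derivation vacuous --- applies equally to the paper's own argument and is harmless, since those items are only invoked for $s\neq 1$.
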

	\begin{proof}
	The first three items come from taking a trace of items (1)-(3) in \Cref{lem:torsionIdentities}. Item (4) is just the trace of (3). Item (5) comes from multiplying (3) by $\B{\eta_j}$ and applying  (1). Finally, the last assertion follows from \Cref{cor:etaIdentities:normT^2} and the fact that $s\neq 0$.
	\end{proof}
	
	In order to obtain more algebraic identities using the K\"ahler-like condition on $R^s$, we will make use of the $\C$-linear extension of the \emph{second-covariant derivative operator}. For a complex tensor bundle $E \to M$ associated to $TM^\C$, this is defined by
	\[
	(\nabla^s)^2 := \nabla^s \circ \nabla^s \colon E \to T^*M\otimes E \to T^*M \otimes T^*M \otimes E.
	\]
	For a $(1,0)$-vector field $X = X^ie_i\in \Gamma(U,T^{1,0}U)$, the frame expression for $(\nabla^s)^2 X$ is given by
	\[
	(\nabla^s)^2X = X^i_{,lk}e^k\otimes e^l\otimes e_i + X^i_{,\B l k}e^k \otimes \B{e^l}\otimes e_i + X^i_{,l \B{k}}\B{e^k} \otimes e^l\otimes e_i.
	\]
	Similar frame expressions for other tensors may be obtained analogously. This operator is related to the curvature and torsion of $\nabla^s$ by the following formula:
	\begin{lem}For all $X,Y \in TM$, $(\nabla^s)^2_{X,Y} - (\nabla^s)^2_{Y,X} = R^s(X,Y) - \nabla^s_{T^s(X,Y)}$.
	\end{lem}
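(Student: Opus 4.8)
The plan is to unwind the definition of the second covariant derivative as a genuine Hessian and then antisymmetrise, at which point the curvature and torsion terms assemble themselves. Recall that for any tensor field $\Phi$ associated to $TM^\C$ one has $(\nabla^s)^2\Phi = \nabla^s(\nabla^s\Phi)$, and evaluating the resulting twice-covariant object on a pair $(X,Y)$ yields the familiar Hessian formula
\[
(\nabla^s)^2_{X,Y}\Phi = \nabla^s_X\nabla^s_Y\Phi - \nabla^s_{\nabla^s_X Y}\Phi,
\]
the subtracted term accounting for the derivative taken in the second slot. First I would check that this agrees with the displayed frame expression $(\nabla^s)^2 X = X^i_{,lk}\,e^k\otimes e^l\otimes e_i + \cdots$, identifying the outer slot of $(X,Y)$ with the direction differentiated last.

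Next I would compute the antisymmetrisation directly. Subtracting the same expression with $X$ and $Y$ interchanged, the second-order terms contribute $\nabla^s_X\nabla^s_Y - \nabla^s_Y\nabla^s_X$, while the first-order corrections combine into $-\nabla^s_{\nabla^s_X Y - \nabla^s_Y X}$. Using the definition of torsion, $\nabla^s_X Y - \nabla^s_Y X = T^s(X,Y) + [X,Y]$, this last term splits as $-\nabla^s_{T^s(X,Y)} - \nabla^s_{[X,Y]}$. Folding the $\nabla^s_{[X,Y]}$ piece back into the commutator of second derivatives reconstructs exactly $R^s(X,Y) = \nabla^s_X\nabla^s_Y - \nabla^s_Y\nabla^s_X - \nabla^s_{[X,Y]}$, leaving precisely the asserted identity. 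After these cancellations each side is manifestly $C^\infty$-linear in $X$ and $Y$, so the computation may be carried out on the fixed local unitary frame without needing $[e_i,e_j]$ to vanish; alternatively, tensoriality is evident from the balanced form of the final expression.

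The only point requiring genuine care — and hence the main, if modest, obstacle — is bookkeeping the ordering convention in the second covariant derivative so that it is consistent with the displayed frame formula; once the slots of $(X,Y)$ are correctly matched to the outer and inner differentiation directions, no substantive calculation remains. Finally, since both $R^s$ and $\nabla^s$ extend $\C$-linearly to the complexified tangent bundle $TM^\C$, the identity holds verbatim there, which is the form in which it will be applied to the contractions of $R^s(\cdot,\cdot)T \otimes \B\eta$ used in the sequel.
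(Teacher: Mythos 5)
Your proof is correct and follows essentially the same route as the paper's: establish the Hessian formula $(\nabla^s)^2_{X,Y} = \nabla^s_X\nabla^s_Y - \nabla^s_{\nabla^s_X Y}$ via the Leibniz rule and then antisymmetrise, letting the definitions of $R^s$ and $T^s$ do the rest. Your extra care with the slot-ordering convention and tensoriality is sound but not a departure from the paper's argument.
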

	\begin{proof}
		By the Leibniz rule for covariant derivatives, we have that
		\[
		(\nabla^s)^2_{X,Y} = \nabla^s_X \left(\nabla(\cdot)\right)(Y) = \nabla^s_X(\nabla^s_Y (\cdot)) - \nabla^s_{\nabla^s_XY}.
		\]
		The result now follows from the definitions of $R^s$ and $T^s$.
	\end{proof}
	By \Cref{lem:torsionComponents}, this yields:
	\begin{cor}\label{cor:secondDerivComm}
		For all indices $1\leq i,j \leq n$,
		\begin{equation*}
			R^s(e_i,e_j) = (\nabla^s)^2_{e_i,e_j} - (\nabla^s)^2_{e_j,e_i} + 2(1-s) \, T_{ij}^k\nabla^s_{e_k},
		\end{equation*}
		and,
		\begin{equation*}
			R^s(e_i,\B{e_j}) = (\nabla^s)^2_{e_i,\B{e_j}} - (\nabla^s)^2_{\B{e_j},e_i} - s \, \B{T^{i}_{jk}}\nabla^s_{e_k} + s \, T^j_{ik}\nabla^s_{\B{e_k}}.
		\end{equation*}
	\end{cor}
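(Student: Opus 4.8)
The plan is simply to combine the immediately preceding Lemma with the torsion formulas from \Cref{lem:torsionComponents}, since both displayed identities are instances of the single formula
\[
(\nabla^s)^2_{X,Y} - (\nabla^s)^2_{Y,X} = R^s(X,Y) - \nabla^s_{T^s(X,Y)}
\]
evaluated on frame vectors of the appropriate type.

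First I would establish the holomorphic-type equation by setting $X = e_i$ and $Y = e_j$, so that
\[
R^s(e_i,e_j) = (\nabla^s)^2_{e_i,e_j} - (\nabla^s)^2_{e_j,e_i} + \nabla^s_{T^s(e_i,e_j)}.
\]
By \Cref{lem:torsionComponents} the $(2,0)$-part of the torsion is $T^s(e_i,e_j) = 2(1-s)\,T_{ij}^k e_k$; substituting this and using the $\C$-linearity of $\nabla^s$ in its lower slot gives the first claimed formula at once.

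For the mixed-type equation I would set $X = e_i$ and $Y = \B{e_j}$, yielding
\[
R^s(e_i,\B{e_j}) = (\nabla^s)^2_{e_i,\B{e_j}} - (\nabla^s)^2_{\B{e_j},e_i} + \nabla^s_{T^s(e_i,\B{e_j})}.
\]
The one point requiring attention is that \Cref{lem:torsionComponents} records $T^s(\B{e_i},e_j)$ rather than $T^s(e_i,\B{e_j})$. Since the torsion is antisymmetric, I would rewrite $T^s(e_i,\B{e_j}) = -T^s(\B{e_j},e_i) = -s\,\B{T^i_{jk}}\,e_k + s\,T^j_{ik}\,\B{e_k}$, and substituting this together with $\C$-linearity of $\nabla^s$ produces the second formula.

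Given how directly the result follows, I do not expect any genuine obstacle: both equations are immediate rewritings of the preceding Lemma once the torsion components are inserted. The only place to be careful is the bookkeeping in the antisymmetrisation $T^s(e_i,\B{e_j}) = -T^s(\B{e_j},e_i)$, so as to ensure the conjugation bars and the holomorphic versus antiholomorphic slots of the covariant derivatives are matched correctly.
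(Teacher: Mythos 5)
Your proposal is correct and is exactly the argument the paper intends: the corollary is stated as an immediate consequence of the preceding lemma combined with \Cref{lem:torsionComponents}, and your substitution of the torsion components (including the antisymmetrisation $T^s(e_i,\B{e_j}) = -T^s(\B{e_j},e_i)$ with the index swap) reproduces both displayed formulas with the correct signs.
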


\section{Gauduchon connections with $s\neq 2/3, 4/5$}

	Our aim in this section is to prove 
	
	\begin{thm}\label{thm_main_body}
		Let $(M,J,g)$ be a Hermitian manifold. Assume that $\nabla^s$ is K\"ahler-like for some $s \neq 0,2$, and that either $|\eta|^2$ attains a maximum, or $s\neq \tfrac23, \tfrac45$. Then, $(M,J,g)$ is K\"ahler.
	\end{thm}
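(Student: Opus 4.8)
The plan is to distil from the K\"ahler-like hypothesis a single algebraic relation forcing a suitable quartic contraction of the torsion to vanish, and then to deduce $\eta \equiv 0$; by the final assertion of \Cref{cor:etaIdentities}, a balanced such manifold is K\"ahler, which finishes the proof. Throughout one has $s \neq 0, 2$ by hypothesis. The values $s = \tfrac12$ and $s = 1$ are degenerate for the linear algebra below---they make the coefficient $c = 4(s-1)(2s-1)$ or the factor $s-1$ vanish, so that \Cref{cor:etaIdentities} no longer determines all first derivatives of $\eta$---and I would dispose of these two cases first via the direct arguments of \cite{ZhaoZhengKahlerLike}. Hence assume in addition $s \neq \tfrac12, 1$.

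The main computational input is a cubic-in-torsion expression for $R^s(\cdot,\cdot)T$. Since \Cref{lem:torsionIdentities} and \Cref{cor:etaIdentities} already reduce $\nabla^s T$ to an expression quadratic in $T$ and $\B T$, differentiating once more and invoking the Leibniz rule shows that the full second covariant derivative $(\nabla^s)^2 T$ is cubic in the torsion. Feeding this into \Cref{cor:secondDerivComm}, the antisymmetrised second derivatives reassemble into $R^s(\cdot,\cdot)T$ plus lower-order terms that are themselves quadratic, so one obtains a tensorial identity $R^s(\cdot,\cdot)T = C_s(T)$ with $C_s(T)$ cubic in $T$ and $\B T$ and with coefficients polynomial in $s$.

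Next I would exploit that $R^s$ is K\"ahler-like, so that $C_s(T) = R^s(\cdot,\cdot)T$ inherits the symmetries $R^s(Z,W,\cdot,\cdot) = 0$ and $R^s(Z,\cdot,W,\cdot) = R^s(W,\cdot,Z,\cdot)$ for $(1,0)$-vectors $Z,W$. Contracting the resulting constraints on $C_s(T)$ against $\B\eta$---two coming from the vanishing condition and two from the index symmetry---produces four scalar identities, each homogeneous and linear in the three quartic contractions $A,B,C \colon M \to \C$ of $T \otimes T \otimes \B T \otimes \B T$ and in $\bgm{\partial |\eta|^2}{\B\eta}$ (the latter entering through the reduction of the first derivatives of $\eta$ supplied by \Cref{cor:etaIdentities}). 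This is a homogeneous linear system $L(s)\,(A,B,C,\bgm{\partial|\eta|^2}{\B\eta})^{\mathsf T} = 0$ whose coefficient matrix depends polynomially on $s$. I would then compute $\det L(s)$ and check that its zero set is exactly $\{0,\tfrac12,\tfrac23,\tfrac45,1,2\}$; for $s$ outside this set the system is non-singular, so every unknown---in particular $C$---vanishes identically. Since one computes $C$ to be a positive multiple of $|\eta|^4$, this gives $\eta \equiv 0$ and hence, via \Cref{cor:etaIdentities}, that $g$ is K\"ahler. This settles all $s \notin \{0,\tfrac12,\tfrac23,\tfrac45,1,2\}$.

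Finally, to cover $s \in \{\tfrac23,\tfrac45\}$ under the assumption that $|\eta|^2$ attains a maximum, I would evaluate the four identities at a point $p$ realising the maximum. There $d|\eta|^2 = 0$, so $\bgm{\partial|\eta|^2}{\B\eta}$ vanishes at $p$ and drops out, leaving an overdetermined system of four equations in the three unknowns $A,B,C$. Even at these two exceptional values of $s$, the three columns of $L(s)$ corresponding to $A,B,C$ have rank three, so the overdetermined system forces $C(p) = 0$, whence $|\eta(p)|^2 = 0$; as $p$ is a maximum, $\eta \equiv 0$ and again $g$ is K\"ahler. The main obstacle I anticipate lies in the bookkeeping of the third and fourth steps: computing $C_s(T)$ explicitly, identifying correctly which quartic contractions $A,B,C$ survive each of the four pairings, and verifying the two linear-algebra claims---that $\det L(s)$ has no spurious roots (any extra root would enlarge the exceptional set) and that the $A,B,C$-columns retain rank three precisely at $s = \tfrac23, \tfrac45$. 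The clean split between \Cref{thm_B} and \Cref{thm_main} rests entirely on the role of $\bgm{\partial|\eta|^2}{\B\eta}$ as the fourth unknown, which is forced to zero at an interior maximum.
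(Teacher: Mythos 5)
Your overall strategy is the same as the paper's: reduce to showing $C=0$, derive four identities linear in $A,B,C,\bgm{\partial|\eta|^2}{\B\eta}$ from the K\"ahler-like condition applied to contractions of $R^s(\cdot,\cdot)T\otimes\B\eta$, invert the system away from the exceptional $s$, and use the maximum of $|\eta|^2$ to kill $\bgm{\partial|\eta|^2}{\B\eta}$ in the remaining cases. Two concrete points in your write-up would fail as stated, though both are repairable. First, $C$ is \emph{not} a positive multiple of $|\eta|^4$: one has $C=|U|^2$ with $U^i_j=T^i_{jk}\B{\eta_k}$, and $|\eta|^2=\operatorname{tr}U$, so the correct deduction is $C=0\Rightarrow U=0\Rightarrow|\eta|^2=\operatorname{tr}U=0$ (equivalently, Cauchy--Schwarz gives $C\geq\tfrac1n|\eta|^4$, but equality does not hold in general). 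The conclusion you need survives, but not for the reason you give.

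Second, and more seriously, your claim that ``the $A,B,C$-columns retain rank three'' at both exceptional values is false at $s=\tfrac23$: there the rank of the $3$-column submatrix drops to $2$, so the overdetermined system at the maximum point does not determine all of $A,B,C$. The argument still goes through, but only because a specific $2\times2$ subsystem in $B$ and $C$ alone is nonsingular --- at $s=\tfrac23$ the first and third equations reduce (up to scale) to $B+C=0$ and $B-C=0$, which forces $C=0$ even though $A$ remains undetermined. Since you explicitly flagged this rank verification as the step to check, you would have discovered the failure, but as written the case $s=\tfrac23$ is not proved. A smaller discrepancy worth noting: in the paper not all four identities come from curvature symmetries --- the first row of the system is a purely first-order consequence of the torsion identities (\Cref{lem:normOfTDeriv}), one row comes from the torsion-less Bianchi identity \eqref{eqn_Bianchi0}, and two from the type condition \eqref{eqn_type}; this mix is what makes the first row degenerate usefully at $s=\tfrac12$ (it becomes $C=0$ outright) and what supplies the $B\pm C$ pair at $s=\tfrac23$.
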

	\Cref{thm_main} immediately follows from \Cref{thm_main_body} by compactness. More generally, we yield the same rigidity when the manifold admits a cocompact group of symmetries.
	\begin{cor}\label{cor_cocompact_symmetries}
		Let $(M,J,g)$ be a Hermitian manifold with $\nabla^s$ K\"ahler-like for some $s\neq 0,2$. If the group of biholomorphic isometries has compact fundamental domain, then $(M,J,g)$ is K\"ahler.
	\end{cor}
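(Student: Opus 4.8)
The plan is to reduce the statement to the maximum-attaining clause of \Cref{thm_main_body}. The guiding observation is that the torsion $(1,0)$-form $\eta$, and hence the function $|\eta|^2 \colon M \to \R$, is canonically associated to the Hermitian structure $(J,g)$: concretely, $\omega = g(J\cdot,\cdot)$ determines the Lee form $\theta$ via $d\omega^{n-1} = \theta \wedge \omega^{n-1}$, and $\eta = -\tfrac12 \theta^{1,0}$. It follows that $|\eta|^2$ is invariant under any biholomorphic isometry of $(M,J,g)$.

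First, I would make this invariance precise. Let $G$ denote the group of biholomorphic isometries and fix $\gamma \in G$. Since $\gamma$ preserves both $J$ and $g$, it preserves $\omega$, hence $\theta$, and therefore $\gamma^*\eta = \eta$, giving $|\eta|^2 \circ \gamma = |\eta|^2$. (Equivalently, $\gamma$ preserves each Gauduchon connection $\nabla^s$ and the Chern torsion $T$, so every tensorial quantity entering the proof of \Cref{thm_main_body} is $G$-invariant.)

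Next, I would use the compact fundamental domain $K \subseteq M$ to promote the supremum of $|\eta|^2$ to an attained maximum. By the defining property $G\cdot K = M$, every $p \in M$ may be written $p = \gamma\cdot q$ with $\gamma \in G$ and $q \in K$; the invariance then gives $|\eta|^2(p) = |\eta|^2(q)$, so that $\sup_M |\eta|^2 = \sup_K |\eta|^2$. Since $|\eta|^2$ is continuous and $K$ is compact, the right-hand side is attained, and hence $|\eta|^2$ attains a maximum on $M$.

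With the maximum hypothesis verified, \Cref{thm_main_body} applies for every $s \neq 0,2$ and yields that $(M,J,g)$ is K\"ahler. The argument is genuinely soft, and in this sense there is no real obstacle: the only point requiring slight care is ensuring that the fundamental domain can be taken compact (replacing $K$ by its closure if necessary, which is harmless since $|\eta|^2$ is continuous), and no analytic input beyond continuity and compactness is needed. All of the substantive work is already contained in \Cref{thm_main_body}.
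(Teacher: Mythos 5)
Your proof is correct and follows essentially the same route the paper intends: since $\eta$ is canonically determined by $(J,g)$, the function $|\eta|^2$ is invariant under biholomorphic isometries, so a compact fundamental domain forces $|\eta|^2$ to attain its maximum, and \Cref{thm_main_body} then applies. This matches the paper's (implicit) argument exactly.
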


	Our proof of \Cref{thm_main_body} will require computing second covariant derivatives of the torsion in order to obtain new algebraic identities. Let us introduce some convenient notation in order to aid in these computations. We define first the following contractions of  $T\otimes\B{T} \in T^{1,0}M \otimes T^{0,1}M\cong T^{1,0}M \otimes \Lambda^{1,0}M$ as:
	\[
		U := \frac{1}{2}T(\cdot,\B{\eta^\sharp}),\qquad V := \frac{1}{4}\bgm{T(e_k,\cdot)}{\B{T(e_k,\cdot)}},\qquad W := \frac{1}{4}T(e_k,e_l)\otimes \B{T(e_k,e_l)}.
	\]
	 Recall that for a co-vector $\alpha \in T^*M^\C$, $\alpha^\sharp \in TM^\C$ is the vector satisfying $\alpha = \bgm{\cdot}{\alpha^\sharp}$. The components of $U,V$ and $W$ in a local unitary frame are respectively given by
	\[
		U^i_j = T^i_{jk}\B{\eta_k},\qquad V_{i\B{j}} = T^r_{ik}\B{T^r_{jk}},\qquad W^{i\B j} := T^i_{kl}\B{T^j_{kl}}.
	\]
	Notice that $V_{i\B j} = \B{V_{j\B i}}$ and $W^{i\B j} = \B{W^{j\B i}}$. Moreover, by \Cref{cor:etaIdentities},
	\begin{equation}
		\label{eqn:etaBarDeriv}
		c\, \eta_{j,\B l} = (a - b)V_{j\B l} + b  \,  \B{U^j_l} + s^3 \, (W^{l\B j} - U^l_j),
	\end{equation}
	for all $1 \leq j,l\leq n$. Next, we define four complex-valued functions on $(M,J,g)$,  contractions of $T \otimes T \otimes \B{T} \otimes \B{T}$, that will play a central role in our computations:
	\[
	A := V_{k\B p}U^k_p,\qquad \tilde A := W^{k\B r}U^r_k, 
	\qquad B := U^k_pU^p_k = \operatorname{tr}(U^2),
	\qquad C := V_{i\B j}\eta_j\B{\eta_i} = U^k_p\B{U^k_p} = |U|^2.
	\]
	Equivalently, using the definition of $U,V$ and $W$, we have:
	\begin{equation}\label{eqn:ABC}
		A = T^k_{pj}T^r_{kl}\B{T^r_{pl}}\B{\eta_j}, 
		\qquad \tilde A = T^k_{pl}T^r_{kj}\B{T^r_{pl}}\B{\eta_j}, 
		\qquad B = T^p_{kj}T^k_{pr}\B{\eta_j}\B{\eta_r},
		\qquad C = T^p_{kj}\B{\eta_j}\B{T^p_{kr}}\eta_r.
	\end{equation}

	A direct application of \Cref{lem:torsionIdentitiesCyclic} of \Cref{lem:torsionIdentities} yields the following algebraic and first-order relations between the above contractions:

	\begin{lem}\label{lem:normOfTDeriv}
		If $\nabla^s$ satisfies the torsion-less Bianchi identity \eqref{eqn_Bianchi0} and $s \notin \{0,1\}$, then the following hold:
		\begin{enumerate}
			\item \label{eqn_twoA} $\tilde A = 2A$;
			\item \label{eqn_delT^2} $c\, \bgm{\partial |T|^2}{\B{\eta}} 
			  =2(a-b - c(s-2))A =
			 -2(s-2)(7s^2-12s+4)A$;
			\item  \label{eqn_deleta^2} $c\, \bgm{\partial |\eta|^2}{\B \eta} = -c(s-2) \, B + (a-b) \, C$.
		\end{enumerate}
	\end{lem}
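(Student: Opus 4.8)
The plan is to establish the three claims by taking suitable contractions of the cyclic torsion identity in Lemma \ref{lem:torsionIdentities}\eqref{lem:torsionIdentitiesCyclic} and combining them with the expression for $\eta_{j,\B l}$ in \eqref{eqn:etaBarDeriv}. Since $s \notin \{0,1\}$, we have $s-1 \neq 0$, so the identity $T_{jk}^r T_{ir}^l + T_{ki}^r T_{jr}^l + T_{ij}^r T_{kr}^l = 0$ holds outright, and this purely algebraic cubic relation is the engine behind all three statements.

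For item \eqref{eqn_twoA}, I would contract the cyclic identity against $\B{T_{pl}^r}\,\B{\eta_j}$ with an appropriate index matching so that two of the three cyclic terms reproduce $A = V_{k\B p}U_p^k$ and $\tilde A = W^{k\B r}U_k^r$ after relabelling, while the third term either vanishes or recombines. Concretely, writing the three terms of the cyclic sum as contractions of $T\otimes T\otimes \B T\otimes \B\eta$ and carefully tracking which pair of $T$-indices is summed against $\B T$, the symmetry $V_{i\B j} = \B{V_{j\B i}}$ together with the definitions in \eqref{eqn:ABC} should collapse the relation to a linear equation among $A$ and $\tilde A$, yielding $\tilde A = 2A$. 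The bookkeeping here is the first place to be careful: one must ensure the contraction genuinely produces $A$ and $\tilde A$ (and not some unnamed third contraction), which is exactly why the cyclic structure matters.

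For items \eqref{eqn_delT^2} and \eqref{eqn_deleta^2}, the strategy is to differentiate $|T|^2 = T_{ij}^k\,\B{T_{ij}^k}$ and $|\eta|^2 = \eta_j\B{\eta_j}$ in a $(1,0)$ direction and then contract against $\B\eta$, i.e.\ compute $\bgm{\partial|T|^2}{\B\eta}$ and $\bgm{\partial|\eta|^2}{\B\eta}$. Differentiating $|\eta|^2$ gives $\partial_k|\eta|^2 = \eta_{j,k}\B{\eta_j} + \eta_j\,\B{\eta_{j,\B k}}$, and contracting against $\B{\eta_k}$ lets me substitute Corollary \ref{cor:etaIdentities}(1) for the holomorphic derivative $\eta_{j,k}$ and the conjugate of \eqref{eqn:etaBarDeriv} for $\B{\eta_{j,\B k}}$. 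The term coming from $\eta_{j,k} = -(s-2)T_{rj}^i T_{ir}^k$ feeds into $B$, while the antiholomorphic derivative produces the $V$- and $U$-contractions that assemble into $C$ (and an $A$-type term); item \eqref{eqn_deleta^2} should then drop out after using \eqref{eqn_twoA} to eliminate $\tilde A$ in favour of $A$ and observing that the $A$-contributions cancel, leaving only $B$ and $C$. The analogous computation for $|T|^2$, using Lemma \ref{lem:torsionIdentities}(1) for $T_{ij,l}^k$ and item (3) for $T_{ij,\B l}^k$, produces the coefficient $a-b-c(s-2)$ multiplying $A$, which one then simplifies to $-(s-2)(7s^2-12s+4)$.

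I expect the main obstacle to be the index bookkeeping in item \eqref{eqn_delT^2}: computing $\bgm{\partial|T|^2}{\B\eta}$ requires substituting the full expression (3) of Lemma \ref{lem:torsionIdentities} for $T_{ij,\B l}^k$ — which itself has four distinct quadratic-in-torsion terms with coefficients $a$, $b$, $b$, $s^3$ — into a contraction that must then be recognised as a multiple of the single function $A$. Several of these terms will initially look like different contractions of $T\otimes T\otimes\B T\otimes\B T$, and showing they all reduce to $A$ (using $\tilde A = 2A$ and possibly reapplying the cyclic identity \eqref{lem:torsionIdentitiesCyclic}) is where the real work lies; the final polynomial simplification $2(a-b) - 2c(s-2) = -2(s-2)(7s^2-12s+4)$ is then a routine algebraic check with the given values of $a,b,c$.
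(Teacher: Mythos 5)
Your proposal matches the paper's proof: item (1) is obtained by contracting the cyclic identity of \Cref{lem:torsionIdentities}(2), and items (2) and (3) are computed exactly as you describe, by differentiating $|T|^2$ and $|\eta|^2$, contracting with $\B{\eta}$, and substituting the first-order identities of \Cref{lem:torsionIdentities} and \Cref{cor:etaIdentities}. The only cosmetic difference is in item (3): the unwanted terms arising from \eqref{eqn:etaBarDeriv} are not $A$-type contributions that cancel, but vanish outright via $T^i_{jk}\eta_i=0$ and the antisymmetry of $T$ (the paper packages this observation as \Cref{cor:etaIdentities}(5)); similarly, in item (2) the $s^3$-terms die by the same trace identity rather than reducing to $A$.
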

	\begin{proof}
		\Cref{eqn_twoA} follows immediately from \Cref{lem:torsionIdentities}, \Cref{lem:torsionIdentitiesCyclic} (using that $s\neq 1$).

		Regarding \Cref{eqn_delT^2}, we compute directly in a local unitary frame $\{e_i\}_{i=1}^n$, using \Cref{lem:torsionIdentities},  \Cref{cor:etaIdentities} and \Cref{eqn_twoA} above: 
		\[
		\begin{split}
			c\, \bgm{\partial|T|^2}{\B \eta} &= c \,  (T^k_{ij}\B{T^k_{ij}})_{,l} \, \B{\eta_l}\\
			&= -c(s-2)T^r_{ij}T^k_{rl}\B{T^k_{ij}}\B{\eta_l}\\
			& \quad \, + T^k_{ij}\left(a\B{T_{i j}^{r}} T_{k l}^{r}+b\left(\B{T_{i r}^{k}} T_{l r}^{j}-\B{T_{j r}^{k}} T_{l r}^{i}\right)+ s^{3}\left(\B{T_{i r}^{l}} T_{k r}^{j}-\B{T_{j r}^{l}} T_{k r}^{i}\right)\right)\B{\eta_ l}\\
			&= (-c(s-2) + a)\tilde A - bA - bA \\
			& = - 2 c (s-2) A  + 2 (a-b) A\\
			&= -2(s-2)(7s^2-12s+4)A.
		\end{split}
		\]

		Finally, for \Cref{eqn_deleta^2} we use again \Cref{cor:etaIdentities} repeatedly:
		\[
		\begin{split}
		c\bgm{\partial|\eta|^2}{\B \eta} &= c \, (\eta_{i}\B{\eta_i})_{j}\B{\eta_j} = c \, \eta_{i,j}\B{\eta_i\eta_j} + c \, \B{\eta_{i,\B j}\B{\eta_i}} \,  \B{\eta_j} \\
		&= -c(s-2)T_{ki}^rT^k_{rj}\B{\eta_i\eta_j} + (a-b) \, \B{T^p_{ki}\B{T^p_{kj}}\B{\eta_i}} \,  \B{\eta_j} \, = \, -c(s-2)B + (a-b)C.
		\end{split}
		\]
	\end{proof}
	
	We now move on to our second order computations. To start, we collect $\nabla^s$-covariant derivatives of the tensors $U,V$ and $W$. Again, to make the expressions more compact, let us introduce tensors $X$, $Y$, and $Z$ defined in a local unitary frame by
	\[
	X^i_{pl} := T^k_{pj}T^r_{kl}\B{T_{ij}^r},
	\qquad Y^i_{pl} := T^k_{rj}T^r_{kp}\B{T_{ij}^l}, \qquad Z^i_{pl} := T^k_{jp}T^i_{kr}\B{T^l_{jr}}.
	\]
	Straightforward observations yield:
	\begin{lem} \label{lem:XYZIdentities} The contractions of $X\otimes \B{\eta}$, $Y \otimes \B{\eta}$ and $Z\otimes \B{\eta}$ are given by,
		\begin{enumerate}
			\item 	$X_{pi}^i\B{\eta_p} = A,\qquad X_{il}^i\B{\eta_l} = 2A$;\\
			\item  $Y_{pi}^i\B{\eta_p} = B,\qquad Y_{il}^i\B{\eta_l} = 0$;\\
			\item $Z_{pi}^{i}\B{\eta_p} = A,\qquad Z_{il}^i\B{\eta_l} = 0$.
		\end{enumerate}
	\end{lem}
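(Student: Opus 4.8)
The plan is to verify the six contraction identities directly by expanding the definitions of $X$, $Y$, $Z$ and performing the index contractions, using the cyclic torsion identity from Lemma~\ref{lem:torsionIdentities}\eqref{lem:torsionIdentitiesCyclic} and the trace relation $\eta_j = T^r_{rj}$ wherever a repeated upper/lower index pair appears. Recall the definitions
\[
X^i_{pl} = T^k_{pj}T^r_{kl}\B{T_{ij}^r},\qquad
Y^i_{pl} = T^k_{rj}T^r_{kp}\B{T_{ij}^l},\qquad
Z^i_{pl} = T^k_{jp}T^i_{kr}\B{T^l_{jr}},
\]
and the target functions from \eqref{eqn:ABC}:
\[
A = T^k_{pj}T^r_{kl}\B{T^r_{pl}}\B{\eta_j},\qquad
B = T^p_{kj}T^k_{pr}\B{\eta_j}\B{\eta_r},\qquad
C = T^p_{kj}\B{\eta_j}\B{T^p_{kr}}\eta_r.
\]

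\medskip

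For the simplest identities I first set the free index equal to the lower index being contracted against $\B{\eta_p}$. Consider $X^i_{pi}\B{\eta_p} = T^k_{pj}T^r_{ki}\B{T^r_{ij}}\B{\eta_p}$; relabelling and comparing with the definition of $A$ (after using $\B{T^r_{ij}} = -\B{T^r_{ji}}$ from the antisymmetry $T^r_{ij}=-T^r_{ji}$ in the lower indices) shows this equals $A$. The second contraction $X^i_{il}\B{\eta_l} = T^k_{ij}T^r_{kl}\B{T^i_{ij}}\B{\eta_l}$ contains the trace $\B{T^i_{ij}} = \B{\eta_j}$, reducing it to $T^k_{ij}T^r_{kl}\B{\eta_j}\B{\eta_l}$—but this is not yet $A$, so I expect here to invoke the cyclic identity \eqref{lem:torsionIdentitiesCyclic} to convert the extra torsion factor into the two-$\B{\eta}$ structure, matching $\tilde A = 2A$ from Lemma~\ref{lem:normOfTDeriv}\eqref{eqn_twoA}. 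The $Y$ and $Z$ contractions are handled the same way: for $Y^i_{pi}\B{\eta_p}$ the factor $\B{T^i_{ij}} = \B{\eta_j}$ appears, collapsing $Y$ into $T^k_{rj}T^r_{kp}\B{\eta_j}\B{\eta_p} = B$ after relabelling; and one checks $Z^i_{pi}\B{\eta_p} = A$ by the antisymmetry bookkeeping.

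\medskip

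The vanishing identities $Y^i_{il}\B{\eta_l} = 0$ and $Z^i_{il}\B{\eta_l} = 0$ are where I expect the real content to sit. In each case setting the free upper index equal to a lower torsion index produces a trace of the form $T^k_{rj}T^r_{ki}$ or $T^i_{kr}$ against $\B{\eta}$'s, and the claim is that the resulting contraction of $T\otimes T$ against $\B{\eta_j}\B{\eta_l}$ is \emph{skew} in a pair of indices that are simultaneously being symmetrised by the two $\B{\eta}$ factors, forcing the sum to cancel. The cleanest route is to feed the cyclic relation \eqref{lem:torsionIdentitiesCyclic}, namely $T^r_{jk}T^l_{ir} + T^r_{ki}T^l_{jr} + T^r_{ij}T^l_{kr} = 0$ (valid since $s\neq 1$), into the expression: contracting this three-term cyclic sum against the appropriate $\B{\eta}$'s and $\B T$'s makes two of the three terms coincide and the third produce $\eta$-traces that vanish by Corollary~\ref{cor:etaIdentities}\eqref{lem:torsionIdentitiesCyclic}-type relations.

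\medskip

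The main obstacle is purely organisational: keeping the index relabelling transparent and ensuring that the antisymmetry $T^k_{ij}=-T^k_{ji}$ is applied consistently so that sign errors do not mask a genuine cancellation (or manufacture a spurious one). I would carry out each of the six contractions in the order listed, reducing every one either to a direct match with $A$ or $B$ via a single trace, or to an application of the cyclic identity; since these are all algebraic consequences of \eqref{lem:torsionIdentitiesCyclic} and the definition $\eta_j = T^r_{rj}$, no second-order or curvature input is needed here, and the lemma follows.
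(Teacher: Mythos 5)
Your overall strategy --- direct index verification using the antisymmetry $T^k_{ij}=-T^k_{ji}$, the trace $\eta_j=T^k_{kj}$, and consequences of the cyclic identity --- is exactly the ``straightforward observation'' the paper intends (it offers no written proof), and the items $X^i_{pi}\B{\eta_p}=A$, $Y^i_{pi}\B{\eta_p}=B$ and $Z^i_{pi}\B{\eta_p}=A$ do come out as you describe (for the first of these, note that matching with $A$ requires \emph{two} compensating sign flips, one in $T^k_{pj}$ and one in $\B{T^r_{pl}}$, so the net sign is $+1$). However, two of your intermediate steps are wrong as written. First, in $X^i_{il}\B{\eta_l}$ no trace $\B{T^i_{ij}}=\B{\eta_j}$ forms: in the definition $X^i_{pl}=T^k_{pj}T^r_{kl}\B{T^r_{ij}}$ the free index $i$ occupies a \emph{lower} slot of the conjugated factor, whose upper index is $r$. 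Setting $p=i$ therefore gives $X^i_{il}=T^k_{ij}\B{T^r_{ij}}\,T^r_{kl}=W^{k\B r}T^r_{kl}$, whence $X^i_{il}\B{\eta_l}=W^{k\B r}U^r_k=\tilde A=2A$ by Lemma \ref{lem:normOfTDeriv}(1); the intermediate expression you write, $T^k_{ij}T^r_{kl}\B{\eta_j}\B{\eta_l}$, is not even a scalar (the indices $i$ and $r$ are unpaired), so the reduction you propose would fail, even though the final input you cite ($\tilde A=2A$) is the correct one.

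Second, the two vanishing identities require none of the three-term cyclic gymnastics or the ``skew pair being symmetrised'' cancellation you anticipate. In both $Y^i_{il}\B{\eta_l}=T^k_{rj}T^r_{ki}\,\B{T^l_{ij}}\B{\eta_l}$ and $Z^i_{il}\B{\eta_l}=T^k_{ji}T^i_{kr}\,\B{T^l_{jr}}\B{\eta_l}$, the conjugated torsion factor has its \emph{upper} index contracted against $\B{\eta_l}$, and $\B{T^l_{jr}\eta_l}=0$ is precisely the conjugate of Corollary \ref{cor:etaIdentities}(2) (valid since $s\neq 1$). The whole expression factors through this single vanishing contraction, so both identities are immediate. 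In summary: right toolbox and correct final answers, but the $X^i_{il}$ bookkeeping is genuinely incorrect, and the mechanism you describe for the vanishing identities is not the one that actually operates.
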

	This notation now allows us to succinctly express the covariant derivatives of $U,V$ and $W$.

	\begin{lem}
		\label{lem:UVWDerivatives}
		If $\nabla^s$ satisfies the torsion-less Bianchi identity \eqref{eqn_Bianchi0} and $s \notin \{0,1\}$, then the following hold for all indices $1\leq i,k,p,q \leq n$:
		\begin{enumerate}
			\item $cU^p_{q,l} = -c(s-2)T^p_{rl}U^r_q+ T^p_{qr}\left((a-b)V_{l\B r} + bU^r_l + s^3W^{r\B l} - s^3\B{U^l_r}\right)$;
			\item $cU^k_{i,\B l} = aU^r_i\B{T^r_{kl}}- bU^k_r\B{T^i_{rl}}- s^3U^l_{r}\B{T^i_{rk}}- c(s-2)\B{Y_{lk}^i}$;
			\item $cV_{p\B i,l} = c\B{V_{i \B p,\B{l}}} = \left(a-c(s-2)\right)X_{pl}^i - bX_{lp}^i - bV_{p\B r}T^i_{rl} - s^3Y_{pl}^i + s^3Z_{pl}^i$;
			\item $ cW^{p\B k}_{, l} = c\B{W^{k\B p}_{,\B{l}}} = aW^{p\B r}T^r_{kl} - 2 bZ_{lk}^p - 2 s^3Z_{kl}^p - c(s-2)W^{r\B k}T^p_{rl}$.
	\end{enumerate}
	\end{lem}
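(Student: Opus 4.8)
The plan is to prove all four identities by the same mechanism. I expand each of the tensors $U^p_q=T^p_{qk}\B{\eta_k}$, $V_{p\B i}=T^r_{pk}\B{T^r_{ik}}$ and $W^{p\B k}=T^p_{ab}\B{T^k_{ab}}$ via the Leibniz rule for $\nabla^s$, and then substitute the first covariant derivatives of $T$ and $\eta$ recorded in \Cref{lem:torsionIdentities} and \Cref{cor:etaIdentities}. Because $\nabla^s$ is a real connection, the covariant derivative of a conjugated component in a holomorphic direction is the conjugate of the derivative of that component in the corresponding antiholomorphic direction; this conjugation rule is what allows the $(0,1)$-derivative formulas to feed into the holomorphic-derivative computations. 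It also makes the two reality statements $cV_{p\B i,l}=c\B{V_{i\B p,\B l}}$ and $cW^{p\B k}_{,l}=c\B{W^{k\B p}_{,\B l}}$ immediate from $V_{i\B j}=\B{V_{j\B i}}$ and $W^{i\B j}=\B{W^{j\B i}}$, so it suffices to establish the holomorphic-derivative formula in each of Items (1)--(4).

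For Item (1) I would write $cU^p_{q,l}=c\,T^p_{qk,l}\,\B{\eta_k}+T^p_{qk}\cdot c\,\B{\eta_{k,\B l}}$. The first summand becomes $-c(s-2)T^p_{rl}U^r_q$ upon inserting the holomorphic-derivative formula $T^k_{ij,l}=-(s-2)T^r_{ij}T^k_{rl}$ from \Cref{lem:torsionIdentities}, while the second is exactly the bracketed term of the claim after substituting the conjugate of \eqref{eqn:etaBarDeriv} and using $\B{V_{k\B l}}=V_{l\B k}$, $\B{W^{l\B k}}=W^{k\B l}$. Item (2) is analogous but uses the mixed-derivative formula $c\,T^k_{ij,\B l}=\dots$ of \Cref{lem:torsionIdentities} on the factor $T^k_{im}$ and the formula for $\eta_{m,l}$ from \Cref{cor:etaIdentities} on $\B{\eta_m}$. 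The key simplification is that, of the four cubic terms produced by the mixed-derivative formula, two carry the factor $\B{T^m_{lr}}\,\B{\eta_m}$ (resp. $\B{T^m_{kr}}\,\B{\eta_m}$), the conjugate of $\eta_i T^i_{jk}$, which vanishes by \Cref{cor:etaIdentities} since $s\neq1$; the surviving terms reassemble, after using antisymmetry, into $aU^r_i\B{T^r_{kl}}-bU^k_r\B{T^i_{rl}}-s^3U^l_r\B{T^i_{rk}}$, and the $\eta$-piece becomes $-c(s-2)\B{Y^i_{lk}}$.

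Items (3) and (4) follow the same template, except that $V$ and $W$ are products of two torsion factors, so both the holomorphic-derivative formula (on the unconjugated factor) and the conjugate of the mixed-derivative formula (on the conjugated factor) enter. The term from the holomorphic-derivative formula combines with the $a$-term of the mixed-derivative formula to give $(a-c(s-2))X^i_{pl}$ in Item (3) and the pair $aW^{p\B r}T^r_{kl}-c(s-2)W^{r\B k}T^p_{rl}$ in Item (4). I expect the main obstacle to be purely combinatorial: matching the remaining $b$- and $s^3$-terms against the packaged tensors $X,Y,Z$, which forces repeated use of the antisymmetry $T^k_{ij}=-T^k_{ji}$ to reorder lower indices. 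In Item (4) this is most delicate: the two $b$-terms (and, separately, the two $s^3$-terms) become equal after relabeling the summation indices $a\leftrightarrow b$ and using antisymmetry of $T^p_{ab}$, which is exactly what produces the coefficients $2$ in $-2bZ^p_{lk}-2s^3Z^p_{kl}$; identifying a single such term with $-Z^p_{lk}$ (resp. $-Z^p_{kl}$) then requires two further applications of antisymmetry to align all three factors with the definition of $Z$. Beyond this index-chasing I anticipate no conceptual difficulty; in particular, the cyclic identity \Cref{lem:torsionIdentitiesCyclic} is not needed for this lemma.
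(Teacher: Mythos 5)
Your proposal is correct and follows essentially the same route as the paper: expand $U$, $V$, $W$ by the Leibniz rule, substitute the first-order identities of \Cref{lem:torsionIdentities} and \Cref{cor:etaIdentities} (via the conjugation rule for mixed-type derivatives), and reassemble the cubic terms into $X$, $Y$, $Z$ using antisymmetry of the lower indices, including the doubling phenomenon in Item (4). The only nit is your closing remark that \Cref{lem:torsionIdentitiesCyclic} is not needed: its trace $(s-1)T^i_{jk}\eta_i=0$ from \Cref{cor:etaIdentities} is exactly what you (correctly, and necessarily) invoke to kill the two terms in Item (2), so only the un-traced cyclic identity is dispensable.
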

	\begin{proof}

		For each item, we repeatedly apply \Cref{lem:torsionIdentities} and \Cref{cor:etaIdentities}.
		To compute item (1), we use \Cref{eqn:etaBarDeriv} to get
			\begin{align*}
				cU^p_{q,l} = cT^p_{qj,l}\B{\eta_j} + cT^p_{qj}\B{\eta_{j,\B l}} = -c(s-2)T^r_{qj}T^p_{rl}\B{\eta_j}+ T^p_{qj}\left((a-b)V_{l\B j}T + bU^j_l + s^3W^{j\B l} - s^3\B{U^l_j}\right),\\
			\end{align*}
		as required. The second item follows similarly. Indeed,
		\begin{align*}
			cU^k_{i,\B l} =& \,  cT^k_{ij,\B l}\B{\eta_j} + cT^k_{ij}\B{\eta_{j,l}}\\
			=& \,  \left(aT_{i j}^{r} \B{T_{k l}^{r}}+b\left(T_{i r}^{k} \B{T_{l r}^{j}}-T_{j r}^{k} \B{T_{l r}^{i}}\right)+ s^{3}\left(T_{i r}^{l} \B{T_{k r}^{j}}-T_{j r}^{l} \B{T_{k r}^{i}}\right)\right)\B{\eta_j} - c(s-2)T^k_{ij}\B{T^p_{rj}T^r_{pl}}\\
			=& \,   aU^r_i\B{T^r_{kl}} + 0 - bU^k_r\B{T^i_{rl}} + 0 - s^3U^l_{r}\B{T^i_{rk}}- c(s-2)T^k_{ij}\B{T^u_{rj}T^r_{ul}},
		\end{align*}
	which gives (2). Once again, to see (3), we have
	\begin{align*}
		cV_{p\B{i},l} &=  c\B{T^k_{ij,\B l}}T^k_{p j} + c\B{T^k_{ij}}T^k_{pj,l}\\
		&= \left(a\B{T_{i j}^{r}} T_{k l}^{r}+b\left(\B{T_{i r}^{k}} T_{l r}^{j}-\B{T_{j r}^{k}} T_{l r}^{i}\right)+ s^{3}\left(\B{T_{i r}^{l}} T_{k r}^{j}-\B{T_{j r}^{l}} T_{k r}^{i}\right)\right)T^k_{pj} - c(s-2)\B{T^k_{ij}}T_{pj}^rT_{rl}^k\\
		&= aX_{pl}^i - bX_{lp}^i - b\B{V_{r\B p}}T^i_{rl} - s^3Y_{pl}^i + s^3Z_{pl}^i - c(s-2)X_{pl}^i.
	\end{align*}
	Finally, to see (4), we have
	\begin{align*}
		cW^{p\B k}_{,l} &= c\B{T^k_{ij,\B l}}T^p_{ij} + c\B{T^k_{ij}}T^p_{ij,l}\\
		&= \left(a \, \B{T_{i j}^{r}} T_{k l}^{r}+b\left(\B{T_{i r}^{k}} T_{l r}^{j}-\B{T_{j r}^{k}} T_{l r}^{i}\right)+ s^{3}\left(\B{T_{i r}^{l}} T_{k r}^{j}-\B{T_{j r}^{l}}T_{k r}^{i}\right)\right)T^p_{ij} - c(s-2)\B{T^k_{ij}}T^r_{ij}T^p_{rl}\\
		&= aW^{p\B r}T^r_{kl} - bZ_{lk}^p - bZ_{lk}^p - s^3Z_{kl}^p - s^3Z_{kl}^p - c(s-2)W^{r\B k}T^p_{rl},
	\end{align*}
	as was to be shown.
	\end{proof}
	We now move on to computing certain contractions of $\big((\nabla^s)^2 T \big)\otimes \eta$:
	
	\begin{lem}\label{lem:etaSecondDeriv}
	If $\nabla^s$ satisfies the torsion-less Bianchi identity \eqref{eqn_Bianchi0} and $s \notin \{0,1\}$, then the following  hold:
	\begin{enumerate}
		\item $c^2 \, \eta_{j,\B l l}  \, \B{\eta_j} = \left((a-b)(-c(s-2) + a - 2b + 2s^3) - 2as^3\right)A+(2b - a)s^3B-\left(b^2+bs^3+s^6\right)C$ ; \\
		\item $c^2\eta_{j,\B j l}\B{\eta_l} = c^2\B{\eta_{j,\B j \B l}\eta_l} =2(a-b+s^3)(a-b-c(s-2))A+ c(b-s^3)\bgm{\partial|\eta|^2}{\B \eta}$;\\
		\item $\begin{aligned}c^2\B{\eta_{j,\B l \B j}\eta_l} =&\left((a-b)(a-3b + s^3 - c(s-2)) -2s^3(a+b+s^3)\right)A -\left((a-b)s^3 + b^2)\right)B \\&+ \left(b(a-b + s^3) + s^3(a+s^3)\right)C ;\end{aligned}$\\
		\item $c^2 \B{T^k_{ij,\B{i}\B j}\eta_k} = \left((a+s^3)(-a-s^3 + c(s-2)) - s^3(a-b+2s^3)\right)A + s^3(a-b+s^3)B + s^6C$.
	\end{enumerate} 
	\end{lem}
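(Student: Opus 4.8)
The plan is to obtain all four identities in the same way: differentiate a first-order expression one more time and then collapse the resulting quartic-in-$T$ contractions down to the three functions $A,B,C$ (and $\bgm{\partial|\eta|^2}{\B{\eta}}$) using the reduction lemmas already in place. The organising principle is that for items (1)--(3) the first covariant derivative of $\eta$ is already expressed through $U,V,W$ in \Cref{eqn:etaBarDeriv}, so a further differentiation reduces to the $\nabla^s$-derivatives of $U,V,W$ supplied by \Cref{lem:UVWDerivatives}; for item (4) the analogous role is played by item (3) of \Cref{lem:torsionIdentities}. Throughout I will use that $s\notin\{0,1\}$: this lets us divide by $c=c(s)$ and, crucially, invoke the vanishing contraction $T^i_{jk}\eta_i=0$ from \Cref{cor:etaIdentities} and the cyclic identity in item (2) of \Cref{lem:torsionIdentities}, both of which annihilate a large number of terms after contraction with $\eta$ or $\B{\eta}$.

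For item (1) I would start from \Cref{eqn:etaBarDeriv}, namely $c\,\eta_{j,\B{l}}=(a-b)V_{j\B{l}}+b\,\B{U^j_l}+s^3(W^{l\B{j}}-U^l_j)$, and apply $\nabla^s_{e_l}$ (summing over $l$) to write $c\,\eta_{j,\B{l}l}$ as a combination of $V_{j\B{l},l}$, $\B{U^j_{l,\B{l}}}$, $W^{l\B{j}}_{,l}$ and $U^l_{j,l}$; here I use that $(\B{U^j_l})_{,l}=\B{U^j_{l,\B{l}}}$ because the metric is $\nabla^s$-parallel. Substituting the four formulas of \Cref{lem:UVWDerivatives} (and the conjugate of the second), contracting with $\B{\eta_j}$, and rewriting the emerging contractions of $X,Y,Z$ through \Cref{lem:XYZIdentities} and the contractions of $U,V,W$ with $\eta$ through the definitions \eqref{eqn:ABC}, yields an expression purely in $A,B,C$. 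Item (3) is entirely analogous: the only differences are that one differentiates \Cref{eqn:etaBarDeriv} in the remaining antiholomorphic slot (hence using the conjugate derivative formulas from \Cref{lem:UVWDerivatives}) and that a slightly different pattern of cancellations occurs, again governed by $T^i_{jk}\eta_i=0$.

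Item (2) admits a shorter route through the trace. Since $\sum_j\eta_{j,\B{j}}$ is exactly the scalar appearing in item (4) of \Cref{cor:etaIdentities}, and $\nabla^s$ commutes with contraction, one has $\eta_{j,\B{j}l}=(\eta_{r,\B{r}})_{,l}$; differentiating $2(2s-1)\,\eta_{r,\B{r}}=s^2|T|^2+s(2-3s)|\eta|^2$ in the $e_l$ direction and contracting with $\B{\eta_l}$ then reduces everything to $\bgm{\partial|T|^2}{\B{\eta}}$ and $\bgm{\partial|\eta|^2}{\B{\eta}}$. Feeding in item (2) of \Cref{lem:normOfTDeriv} to convert $\bgm{\partial|T|^2}{\B{\eta}}$ into a multiple of $A$ while leaving $\bgm{\partial|\eta|^2}{\B{\eta}}$ intact produces exactly the claimed form; indeed, one checks that $a-b=s^2(s-2)$ gives $2(a-b+s^3)(a-b-c(s-2))=-4s^2(s-1)(s-2)(7s^2-12s+4)$ and $c(b-s^3)=2cs(s-1)(2-3s)$, matching the coefficients produced by the trace. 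The stated equality with the conjugate expression $\B{\eta_{j,\B{j}\B{l}}\eta_l}$ is then just the reality of this trace contraction.

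Item (4) is the genuine second covariant derivative of $T$ and will be the main obstacle. Here I would differentiate the formula of item (3) of \Cref{lem:torsionIdentities} for $T^k_{ij,\B{l}}$ a second time in an antiholomorphic direction, using the conjugates of \Cref{lem:UVWDerivatives}; after setting $l=i$, summing, differentiating in $\B{e_j}$ and contracting with $\eta_k$, a large collection of quartic contractions appears. The difficulty is purely bookkeeping and cancellation: each differentiation produces on the order of a dozen terms, and one must match every index, apply $T^i_{jk}\eta_i=0$ to discard the vanishing ones, and identify the survivors with $A=V_{k\B{p}}U^k_p$, $\tilde A=2A$, $B$ and $C$ via \Cref{lem:XYZIdentities} and \Cref{lem:normOfTDeriv}. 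The final, most error-prone step common to all four items is the polynomial algebra in $s$: collecting the coefficients, which are products of $a(s),b(s),c(s)$ and $s^3$, and simplifying them to the stated closed forms. I expect no new geometric input beyond the lemmas already proved; the entire content is the disciplined execution of these contractions and the subsequent simplification.
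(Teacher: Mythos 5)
Your proposal takes essentially the same route as the paper's proof: item (2) by differentiating the traced identity for $\eta_{j,\B j}$ and invoking \Cref{lem:normOfTDeriv}, and items (1), (3) and (4) by differentiating \eqref{eqn:etaBarDeriv} (respectively the trace over $l=i$ of item (3) of \Cref{lem:torsionIdentities}) in the appropriate direction, substituting \Cref{lem:UVWDerivatives}, and collapsing the resulting quartic contractions via \Cref{lem:XYZIdentities}, $T^i_{jk}\eta_i=0$ and $\tilde A = 2A$. The strategy and all the required ingredients are correctly identified, and your coefficient checks for item (2) are right; what remains for items (1), (3) and (4) is only the term-by-term bookkeeping that the paper writes out explicitly.
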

	\begin{proof}

		For the first item, using \Cref{lem:UVWDerivatives} and \Cref{lem:XYZIdentities}, we have
		\begin{align*}
			c^2\eta_{j,\B l l}\B{\eta_j} &=  c\left((a - b)V_{j\B l} + b  \B{U^j_l} + s^3 \, (W^{l\B j} - U^l_j)\right)_{,l}\B{\eta_j}\\
			&= c(a-b)V_{p\B i, i}\B{\eta_p} + cb\B{U^k_{l,\B{l}} \eta_k}+ cs^3W^{l\B k}_{,l}\B{\eta_k} - cs^3U^l_{j,l}\B{\eta_j}\\
			&=  (a-b)\left(aX_{pi}^i - bX_{ip}^i - b\B{V_{r\B p}}T^i_{ri} - s^3Y_{pi}^i + s^3Z_{pi}^i - c(s-2)X_{pi}^i\right)\B{\eta_p}\\
			&  \quad +b\left(a\B{U^r_l}T^r_{kl}- b\B{U^k_r}T^l_{rl}- s^3\B{U^l_{r}}T^l_{rk}- c(s-2)Y_{lk}^l\right)\B{\eta_k}\\
			& \quad   +s^3\left(aW^{l\B r}T^r_{kl} - 2 bZ_{lk}^l - 2 s^3Z_{kl}^l - c(s-2)W^{r\B k}T^l_{rl}\right)\B{\eta_k}\\
			&  \quad  -s^3\left(-c(s-2)T^l_{rl}U^r_q+ T^l_{qr}\left((a-b)V_{l\B r} + bU^r_l + s^3W^{r\B l} - s^3\B{U^l_r}\right)\right)\B{\eta_q}\\
			&=  a(a-b)A - 2b(a-b)A + b(a-b)C- s^3(a-b)B + (a-b)s^3A - c(a-b)(s-2)A\\
			& \quad  -abC + 0 - bs^3C + 0\\
			& \quad  -2as^3A  +0 - 2s^6A + 0\\
			& \quad   + 0 + (a-b)s^3A + bs^3B + 2s^6A - s^6C\\
			&=  \left((a-b)(-c(s-2) + a - 2b + 2s^3) - 2as^3\right)A + (2b-a)s^3B - (b^2+bs^3+s^6)C.
		\end{align*}
		For Item (2), we first note that by \Cref{lem:torsionIdentities}, 
		\[
			c^2\eta_{j,\B j} = c(a-b+s^3)|T|^2 + c(b-s^3)|\eta|^2.
		\]
		This is real, which proves the first equality. To see the second, we compute another derivative and apply \Cref{lem:normOfTDeriv} to yield,
		\begin{align*}
			c^2\eta_{j,\B jl}\B{\eta_l} &= \bgm{\partial c^2\eta_{j,\B j}}{\B{\eta}}\\
			&=  c(a-b+s^3)\bgm{\partial|T|^2}{\B \eta} + c(b-s^3)\bgm{\partial |\eta|^2}{\B \eta}\\
			&=2(a-b+s^3)(a-b-c(s-2))A + c(b-s^3)\bgm{\partial|\eta|^2}{\B \eta}.
		\end{align*}
		For Item (3), we proceed similarly to the proof of Item (1). Indeed, we have
		\begin{align*}
			c^2\B{\eta_{j,\B l\B j}\eta_l} &=   c\left((a - b)V_{l\B j} + b  U^j_l + s^3 \, (W^{j\B l} - \B{U^l_j})\right)_{,j}\B{\eta_l}\\
			&=  (a-b)cV_{p\B i,i}\B{\eta_p} + cbU^p_{q,p}\B{\eta_q} + s^3cW^{p\B k}_{,p}\B{\eta_k} - s^3c\B{U^k_{i,\B i}}\B{\eta_k}\\
			&=  (a-b)\left(aX_{pi}^i - bX_{ip}^i - bV_{p\B r}T^i_{ri} - s^3Y_{pi}^i + s^3Z_{pi}^i - c(s-2)X_{pi}^i\right)\B{\eta_p}\\
			&  \quad +b\left(-c(s-2)T^p_{rp}U^r_q+ T^p_{qr}\left((a-b)V_{p\B r} + bU^r_p + s^3W^{r\B p} - s^3\B{U^p_r}\right)\right)\B{\eta_q}\\
			&  \quad +s^3\left(aW^{p\B r}T^r_{kp} - 2 bZ_{pk}^p - 2 s^3Z_{kp}^p - c(s-2)W^{r\B k}T^p_{rp}\right)\B{\eta_k}\\
			& \quad   -s^3\left(a\B{U^r_i}T^r_{ki}- b\B{U^k_r}T^i_{ri}- s^3\B{U^i_{r}}T^i_{rk}- c(s-2)Y_{ik}^i\right)\B{\eta_k}\\
			&=  a(a-b)A - 2b(a-b)A + b(a-b)C - s^3(a-b)B + s^3(a-b)A - c(s-2)(a-b)A\\
			& \quad   + 0 - b(a-b)A - b^2B - 2bs^3A + bs^3C\\
			&  \quad-2as^3A + 0 -2s^6A + 0\\
			& \quad  +as^3C + 0 +s^6C + 0.\\
			&=   \left((a-b)(a-3b + s^3 - c(s-2)) -2s^3(a+b+s^3)\right)A -\left((a-b)s^3 + b^2\right)B \\
			&\quad+ \left(b(a-b+s^3)+s^3(a+s^3)\right)C.
		\end{align*}
		Finally, to prove Item (4), we first note that
		\[
		cT^k_{ij,\B{i}} = -(a+s^3)V_{j\B k} + b(W^{k\B j} - U^k_j) + s^3\B{U^j_k}.
		\]
		Thus,
		\begin{align*}
			c^2\B{T^k_{ij,\B{i}\B j}\eta_k} &= c\left(-(a+s^3)V_{k\B j} + b(W^{j\B k} - \B{U^k_j}) + s^3U^j_k)\right)_{,j}\B{\eta_k}\\
			&=-c(a+s^3)V_{p\B i,i}\B{\eta_p} + bcW^{p\B k}_{,p}\B{\eta_k} - bc\B{U^k_{i,\B i}}\B{\eta_k} + cs^3U^p_{q, p}\B{\eta_q}\\
			&=-(a+s^3)\left(aX_{pi}^i - bX_{ip}^i - bV_{p\B r}T^i_{ri} - s^3Y_{pi}^i + s^3Z_{pi}^i - c(s-2)X_{pi}^i\right)\B{\eta_p}\\
			&\quad+b\left(aW^{p\B r}T^r_{kp} - 2 bZ_{pk}^p - 2 s^3Z_{kp}^p - c(s-2)W^{r\B k}T^p_{rp}\right)\B{\eta_k}\\
			&\quad-b\left(a\B{U^r_i}T^r_{ki}- b\B{U^k_r}T^i_{ri}- s^3\B{U^i_{r}}T^i_{rk}- c(s-2)\B{Y_{ik}^i}\right)\B{\eta_k}\\
			&\quad+s^3\left(-c(s-2)T^p_{rp}U^r_q+ T^p_{qr}\left((a-b)V_{p\B r} + bU^r_p + s^3W^{r\B p} - s^3\B{U^p_r}\right)\right)\B{\eta_q}\\
			&= -a(a+s^3)A + 2b(a+s^3)A - b(a+s^3)C + s^3(a+s^3)B - s^3(a+s^3)A + c(s-2)(a+s^3)A\\
			&\quad-2abA + 0 - 2bs^3A + 0\\
			&\quad+abC + 0 + bs^3C + 0\\
			&\quad+0  - s^3(a-b)A - s^3bB - 2s^6A + s^6C\\
			&= \left((a+s^3)(-a-s^3 + c(s-2)) - s^3(a-b+2s^3)\right)A + s^3(a-b+s^3)B + s^6C, 
		\end{align*}
		as required.
		\end{proof}

	Since we now have formulas for the second covariant derivatives of $\eta$, we can use K\"ahler-like condition to derive new algebraic identities.
	\begin{lem}\label{lem:linearSystem}
		 If $\nabla^s$ is K\"ahler-like and $s\notin\{0,1\}$, then
		\begin{align}\label{eqn_linearsystem}
			\begin{pmatrix}
				0&c(s-2)&(b-a)&c\\
				x_1&x_2&x_3&-c(b-s^3)\\
				y_1&y_2&y_3&-c(b-s^3)\\
				z_1&z_2&z_3&0
			\end{pmatrix}\begin{pmatrix}
				A\\B\\C\\\bgm{\partial |\eta|^2}{\B\eta}
			\end{pmatrix} = 0,
		\end{align}
	where
	\begin{align*}
		x_1 :=&\,-\left(a-b+2 s^3\right)\left(a+b+c s+s^3\right);\\
		x_2 :=&\, 2bc(1-s) - (a-b)s^3 - b^2;\\
		x_3 :=&\, b(a-b+s^3)+s^3(a+s^3-2c(1-s));\\
		y_1 :=&\, -\left(a-b+2s^3\right) (a-2 c (s-1));\\
		y_2 :=&\, s^3(2b-a-c s) + c^2s(s-2);\\
		y_3 :=&\, acs - b^2 - bs^3 - s^6;\\
		z_1 :=&\, -(a+s^3)(a+s^3 + cs) - s^3(a-b+2s^3);\\
		z_2 :=&\, s^3(a-b+s^3);\\
		z_3 :=&\, s^6.\\
	\end{align*}
	In particular, the determinant of this system is
	\[
	\det \begin{pmatrix}
		0&c(s-2)&(b-a)&c\\
		x_1&x_2&x_3&-c(b-s^3)\\
		y_1&y_2&y_3&-c(b-s^3)\\
		z_1&z_2&z_3&0
	\end{pmatrix} = 64 \,  s^8 (s-2)^3 (s-1)^3 (2 s-1)^3 (3s-2)^2 (5s-4). 
	\]
	\end{lem}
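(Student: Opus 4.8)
The plan is to produce the four rows of \eqref{eqn_linearsystem} as four scalar identities obtained by contracting the K\"ahler-like symmetries of $R^s$ against $\B\eta$, and then to evaluate the determinant as an explicit polynomial in $s$. The first row needs no new input: it is exactly \Cref{eqn_deleta^2} of \Cref{lem:normOfTDeriv}, rearranged as $c(s-2)B + (b-a)C + c\bgm{\partial|\eta|^2}{\B\eta} = 0$, and depends only on the torsion-less Bianchi identity \eqref{eqn_Bianchi0}.

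For the remaining rows I would push the K\"ahler-like hypothesis through \Cref{cor:secondDerivComm}, which expresses the mixed and anti-holomorphic curvature operators $R^s(e_i,\B{e_j})$ and $R^s(\B{e_i},\B{e_j})$ as commutators of second covariant derivatives plus explicit first-order torsion corrections (the terms $-s\B{T^i_{jk}}\nabla^s_{e_k} + sT^j_{ik}\nabla^s_{\B{e_k}}$, respectively $2(1-s)\B{T^k_{ij}}\nabla^s_{\B{e_k}}$). K\"ahler-likeness furnishes three curvature constraints: the vanishing of the $(2,0)$- and $(0,2)$-parts, and the holomorphic-slot swap symmetry $R^s(e_i,\B{e_j},e_k,\B{e_l}) = R^s(e_k,\B{e_j},e_i,\B{e_l})$. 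Applying each constraint to $R^s(\cdot,\cdot)T$ and contracting against $\B\eta$ — tracing $T$ into $\eta$ via $\eta_k = T^i_{ik}$ where appropriate, and pairing with a further factor of $T$ where the double divergence is needed — turns each into a scalar relation whose second-derivative part is precisely one of the four quantities computed in \Cref{lem:etaSecondDeriv}, while the first-order correction terms and the leftover first derivatives collapse to $A,B,C$ and $\bgm{\partial|\eta|^2}{\B\eta}$ through \Cref{cor:etaIdentities} and \Cref{lem:normOfTDeriv}. Dividing by the common factor $c^2$ then reads off the entries $x_i,y_i,z_i$. I expect that the two rows sharing the last entry $-c(b-s^3)$ arise from contractions involving $c^2\eta_{j,\B jl}\B{\eta_l}$ — the only one of the four quantities of \Cref{lem:etaSecondDeriv} carrying a $\bgm{\partial|\eta|^2}{\B\eta}$-term — while the third row, coming from the double divergence $\B{T^k_{ij,\B i\B j}\eta_k}$, has vanishing $\bgm{\partial|\eta|^2}{\B\eta}$-coefficient; this accounts for the last column $(c,-c(b-s^3),-c(b-s^3),0)^{\mathsf T}$ of the matrix.

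With the matrix fixed, the determinant is a mechanical computation. I would substitute $a = -4s(s-1)^2$, $b = -s(5s^2-10s+4)$ and $c = 4(s-1)(2s-1)$ so that every entry becomes an explicit polynomial in $s$, then expand the $4\times 4$ determinant (cofactor expansion along the first row is convenient, since it has a zero entry). Rather than expanding blindly, one can confirm the claimed value $64\,s^8(s-2)^3(s-1)^3(2s-1)^3(3s-2)^2(5s-4)$ by checking that $s = 0,\tfrac12,\tfrac23,\tfrac45,1,2$ are roots of the respective multiplicities $8,3,2,1,3,3$, that the total degree is $20$, and that the leading coefficient is $64\cdot 2^3\cdot 3^2\cdot 5 = 23040$.

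The genuine difficulty is concentrated in the second step: one must match each K\"ahler-like constraint to the precise contraction whose second-derivative part is one of the quantities of \Cref{lem:etaSecondDeriv}, and then track with care every first-order torsion correction produced by \Cref{cor:secondDerivComm}. These corrections are not negligible — the $B$-entry $x_2 = 2bc(1-s) - (a-b)s^3 - b^2$, say, differs from the bare second-derivative contribution supplied by \Cref{lem:etaSecondDeriv}, so discarding the correction terms would corrupt an entire row. By contrast, once the rows are correct the determinant evaluation, though lengthy, presents no conceptual obstruction and is readily automated.
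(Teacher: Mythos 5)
Your plan matches the paper's proof essentially step for step: row 1 is \Cref{lem:normOfTDeriv}(3), and rows 2--4 are obtained exactly as you describe by contracting K\"ahler-like curvature identities through \Cref{cor:secondDerivComm} against $\B{\eta}$ (the paper uses the vanishing of $R^s(e_i,e_j)$ twice, once applied to $\B{\eta^\sharp}$ and once to the torsion $T$, and the first Bianchi identity once --- which under the type condition is your slot-swap symmetry), with the second-derivative parts supplied by \Cref{lem:etaSecondDeriv} and the first-order corrections collapsed to $A,B,C,\bgm{\partial|\eta|^2}{\B\eta}$ via \Cref{cor:etaIdentities}, followed by a direct polynomial evaluation of the determinant. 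The only caveat is that you have outlined rather than executed the row-by-row bookkeeping of the correction terms, which is where essentially all of the work in the paper's proof resides.
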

	\begin{proof}
		The first equation is precisely \Cref{lem:normOfTDeriv}, \Cref{eqn_deleta^2}. For the second row, we have by the type condition and \Cref{cor:secondDerivComm} that 
		\[
			0 = c^2\bgm{R^s(e_j,\B{\eta^\sharp})\B{\eta^\sharp}}{\B{e_j}} = c^2\left(\B{\eta_{j,\B l \B j}} - \B{\eta_{j, \B j \B l}} + 2(1-s)T_{j l}^k\B{\eta_{j,\B k}}\right)\B{\eta_l}.
		\]
		Computing the first order term, we have
		\begin{align*}
			2c^2(1-s)T^k_{jl}\B{\eta_{j,\B k}}\B{\eta_l} &= 2c^2(1-s)T^l_{jk}\B{\eta_{j,\B l}}\B{\eta_k}\\
			&=   2c(1-s)T^l_{jk}\left((a - b)V_{l\B j} + b  U^j_l + s^3 \, (W^{j\B l} - \B{U^l_j})\right)\B{\eta_k}\\
			&=   2(a-b)c(1-s)A + 2bc(1-s)B + 4c(1-s)s^3A - 2cs^3(1-s)C\\
			&=  2c(1-s)(a-b + 2s^3)A + 2bc(1-s)B - 2cs^3(1-s)C.
		\end{align*}
	Thus, by \Cref{lem:etaSecondDeriv},
	\begin{align*}
		0 &= c^2\B{\eta_{j,\B{l j}}\eta_l} - c^2\B{\eta_{j,\B {j l}} \eta_l} + 2c^2(1-s)T^k_{jl}\B{\eta_{j,\B k}}\B{\eta_l}\\
		&= \left((a-b)(a-3b + s^3 - c(s-2)) -2s^3(a+b+s^3)\right)A -\left((a-b)s^3 + b^2)\right)B \\
		& \quad + \left(b(a-b + s^3) + s^3(a+s^3)\right)C\\
		&  \quad-2(a-b+s^3)(a-b-c(s-2))A- c(b-s^3)\bgm{\partial|\eta|^2}{\B \eta}\\
		& \quad  + 2c(1-s)(a-b + 2s^3)A + 2bc(1-s)B - 2cs^3(1-s)C\\
		&= -\left(a-b+2 s^3\right)\left(a+b+c s+s^3\right)A + \left(2bc(1-s) - (a-b)s^3 - b^2\right)B\\
		 &\quad   +\left(b(a-b+s^3)+s^3(a+s^3-2c(1-s))\right)C- c(b-s^3)\bgm{\partial|\eta|^2}{\B \eta}\\
		&= x_1A+x_2B+x_3C - c(b-s^3)\bgm{\partial|\eta|^2}{\B \eta}.
	\end{align*}
	For the third row, we use the torsion-less Bianchi identity \eqref{eqn_Bianchi0} for $R^s$ and \Cref{cor:secondDerivComm} to get
	\begin{align*}
		0 &= \, c^2\bgm{R^s(e_{l},\B{e_l})\eta^\sharp}{\B{\eta^\sharp}}  - c^2\bgm{R^s(\B{\eta^\sharp},\B{e_l})\eta^\sharp}{e_l}\\
		&=  \, c^2\left(\eta_{j,\B l l} - \eta_{j, l \B l} - s\B{\eta_k}\eta_{j,k} + s\eta_k\eta_{j,\B k}\right)\B{\eta_j}\\
		& \quad -c^2\left(\eta_{l,\B l j} - \eta_{l,j\B l} - s\B{T^j_{lk}}\eta_{l,k} + sT^l_{jk}\eta_{l,\B k}\right)\B{\eta_j}\\
		&=  \, c^2\left(\eta_{j,\B l l} - \eta_{l,\B l j}\right)\B{\eta_j}+ c^2s\left(\eta_k\eta_{j,\B k} - \B{\eta_k}\eta_{j,k} - T^l_{jk}\eta_{l,\B k}\right)\B{\eta_j},
	\end{align*}
	where in the last line, we used the fact that $\eta_{j,l} = \eta_{l,j}$ by \Cref{cor:etaIdentities}. Applying \Cref{cor:etaIdentities} again, the first order term is given by
	\begin{align*}
		c^2s\left(\eta_k\eta_{j,\B k} - \B{\eta_k}\eta_{j,k} - T^l_{jk}\eta_{l,\B k}\right)\B{\eta_j} &= cs\eta_l\left((a - b)V_{j\B l} + b  \B{U^j_l} + s^3 \, (W^{l\B j} - U^l_j)\right)\B{\eta_j}\\
		&  \quad +c^2s(s-2)\B{\eta_k}T^r_{lj}T^l_{rk}\B{\eta_j}\\
		& \quad  -csT^j_{kl}\left((a - b)V_{j\B l} + b  \B{U^j_l} + s^3 \, (W^{l\B j} - U^l_j)\right)\B{\eta_k}\\
		&= cs(a-b)C + 0 + 0 - 0\\
		&   \quad +c^2s(s-2)B\\
		&  \quad  + cs(a-b)A + bcsC + 2cs^4A - cs^4B\\
		&= cs(a-b + 2s^3)A + cs(c(s-2)-s^3)B+ acsC.
	\end{align*}
	Hence, substituting this back into the above equation and using \Cref{lem:etaSecondDeriv}, we have
	\begin{align*}
		0 &= \left((a-b)(-c(s-2) + a - 2b + 2s^3) - 2as^3\right)A+(2b - a)s^3B-\left(b^2+bs^3+s^6\right)C\\
		& \quad -2(a-b+s^3)(a-b-c(s-2))A- c(b-s^3)\bgm{\partial|\eta|^2}{\B \eta}\\
		&  \quad+cs(a-b + 2s^3)A + cs(c(s-2)-s^3)B+ acsC\\
		&= -\left(a-b+2s^3\right) (a-2 c (s-1))A + \left(s^3(2b-a-c s) + c^2s(s-2)\right)B \\
		& \quad + \left(acs - b^2 - bs^3 - s^6\right)C - c(b-s^3)\bgm{\partial |\eta|^2}{\B{\eta}}\\
		&=  y_1A + y_2B + y_3C - c(b-s^3)\bgm{\partial |\eta|^2}{\B{\eta}}
	\end{align*}
	Finally, to establish the last equation, we again use the type condition: for all $1 \leq i,j\leq n$ it holds that $R^s(e_i,e_j) = 0$. In particular, the same is true when applying $R^s$ to the torsion tensor $T$. Thus, by \Cref{cor:secondDerivComm} and \Cref{lem:etaSecondDeriv},
	\begin{align*}
		0 &= c^2\B{\eta\left((R^s(\B{e_j},\B{e_i})T)(e_i,e_j)\right)}\\ 
		&= \frac{c^2}{2}\B{\left(T_{ij,\B i \B j}^k - T_{ij,\B j \B i}^k + 2(1-s)\B{T_{ji}^l}T^k_{ij,\B l}\right)}\B{\eta_k} \\
		&= c^2\B{T^k_{ij,\B{ij}}\eta_k} + c^2(s-1)T^l_{ij}\B{T^k_{ij,\B l}\eta_k}\\
		&= \left((a+s^3)(-a-s^3 + c(s-2)) - s^3(a-b+2s^3)\right)A + s^3(a-b+s^3)B + s^6C\\
		&  \quad + c(s-1)T^l_{ij}\left(a \, \B{T_{i j}^{r}} T_{k l}^{r}+b\left(\B{T_{i r}^{k}} T_{l r}^{j}-\B{T_{j r}^{k}} T_{l r}^{i}\right)+ s^{3}\left(\B{T_{i r}^{l}} T_{k r}^{j}-\B{T_{j r}^{l}}T_{k r}^{i}\right)\right)\B{\eta_k}\\
		&=  \left((a+s^3)(a-s^3 + c(s-2)) - s^3(a-b+2s^3)\right)A + s^3(a-b+s^3)B + s^6C\\
		& \quad  -2ac(s-1)A + 0 + 0 -cs^3(s-1)A - cs^3(s-1)A\\
		&=   -\left((a+s^3)(a+s^3 + cs) + s^3(a-b+2s^3)\right)A + s^3(a-b+s^3)B + s^6C\\
		&=    z_1A + z_2B + z_3C,
	\end{align*}
	as required.
	\end{proof}	

	\begin{proof}[Proof of Theorem \ref{thm_main_body}] 
First notice that thanks to \Cref{cor:etaIdentities}, it suffices to show that $\eta \equiv 0$. Moreover, to prove that $\eta$ vanishes at a point $p\in M$, it is in turn enough to show $C=0$ at $p$. Indeed, since $C = |U|^2$, it follows that $U=0$ and thus $|\eta|^2 = \operatorname{tr}U = 0$. 

	We divide the proof into cases according to the different values of $s$. First recall that the case $s=1/2$ was established in \cite[~Lemma 3]{ZhaoZhengKahlerLike} (alternatively, for $s=1/2$ the first equation in \eqref{eqn_linearsystem} becomes $C=0$). Regarding $s=1$, by  \cite[~Proposition 1]{ZhaoZhengKahlerLike} we have $\eta = 0$ and thus we are done as well. 

	Next, we assume that $s \notin \{0, \frac{1}{2},\frac{2}{3},\frac{4}{5},1, 2\}$. For these values of $s$, the determinant of the system \eqref{eqn_linearsystem} is non-zero, thus $C=0$ everywhere and the result follows.

	Finally, we deal with the cases $s=2/3, 4/5$ assuming that $|\eta|^2$ attains its maximum at $p \in M$. Then at $p$, the system \eqref{eqn_linearsystem} becomes overdetermined, with $4$ equations for $3$ unknowns $A, B, C$. For $s=4/5$ the system has rank $3$, from which $C = 0$ at $p$. As above, this implies that $|\eta|^2 = 0$ at $p$, and since this was the maximum, we deduce $\eta \equiv 0$. For $s=2/3$, the rank is $2$, but still by looking at the first and third equations, they respectively become (up to scaling) 
	\[
		B + C = 0, \qquad B-C = 0,
	\]
	hence $C=0$ at $p$ and we may argue as in the previous case. The theorem follows.
	\end{proof}
	\bibliography{GauduchonFlat.bib}
	\bibliographystyle{amsalpha}
\end{document}